\tikzset{
  > = {Latex},
  inner sep = 4pt,
  outer sep = auto
}
\tikzset{commutative diagrams/.cd}
\tikzstyle{every node}=[anchor=west, minimum height=3em]
\definecolor{forestgreen}{rgb}{0.00, 0.39, 0.00} 
\definecolor{blueblue}{rgb}{0.40, 0.00, 1.00} 
\definecolor{sienna}{rgb}{0.33, 0.08, 0.11}
\newcommand{\defi}{\textbf}
\newcommand{\p}{\partial}
\newcommand{\spn}{\text{span}}
\theoremstyle{plain}
\newtheorem{theorem}{Theorem}[section] 
\newtheorem*{theorem*}{Theorem}
\newtheorem*{hypothesis*}{Hypothesis}
\newtheorem{lemma}[theorem]{Lemma}
\newtheorem{proposition}[theorem]{Proposition}
\newtheorem{corollary}[theorem]{Corollary}
\newtheorem{notation}[theorem]{Notation}
\theoremstyle{definition}
\newtheorem{definition}[theorem]{Definition}
\newtheorem*{definition*}{Definition}
\theoremstyle{remark}
\newtheorem{remark}[theorem]{\sc Remark}
\newtheorem*{question*}{\sc Question}
\newtheorem*{remark*}{\sc Remark}
\newtheorem*{example*}{\sc Example}
\newtheorem{example}[theorem]{\sc Example}
\pgfplotsset{width=7cm,compat=1.8}
\renewcommand*{\numberline}[1]{\hb@xt@1em{#1\hfil}} 
\pgfplotsset{width=7cm,compat=1.8}
\newcommand\RR{\mathbb{R}}
\newcommand\by{\boldsymbol{y}}
\newcommand\byi{{\boldsymbol{y}_i}}
\newcommand\byj{{\boldsymbol{y}_j}}
\newcommand\byp{{\boldsymbol{y}^\prime}}
\newcommand\bk{\boldsymbol{k}}
\newcommand\bw{\boldsymbol{w}}
\newcommand\bx{\boldsymbol{x}}
\newcommand\bv{\boldsymbol{v}}
\newcommand\bz{\boldsymbol{z}}
\newcommand\bJ{\boldsymbol{J}}
\newcommand\bK{\boldsymbol{K}}
\newcommand\bX{\boldsymbol{X}}
\newcommand\bbeta{\boldsymbol\beta}
\newcommand\bB{\mathcal{B}}
\newcommand\blam{\boldsymbol{\lambda}}
\title{The toric locus of a reaction network is a smooth manifold}
\date{}
\author{Gheorghe Craciun\thanks{University of Wisconsin-Madison, USA}, Jiaxin Jin\thanks{Ohio State University, USA}, Miruna-\c Stefana Sorea\thanks{SISSA (Scuola Internazionale Superiore di Studi Avanzati), Trieste, Italy and RCMA Lucian Blaga University, Sibiu, Romania}}
\begin{document}

\maketitle

\begin{abstract}

\noindent 
We show that the toric locus of a reaction network is a smoothly embedded submanifold of the Euclidean space. More precisely, we prove that the toric locus of a reaction network is the image of an embedding and it is diffeomorphic to the product space between the affine invariant polyhedron of the network and its set of complex-balanced flux vectors. Moreover, we prove that within each affine invariant polyhedron, the complex-balanced equilibrium depends smoothly on the parameters (i.e., reaction rate constants). We also show that the complex-balanced equilibrium depends smoothly on the initial conditions.
\end{abstract}

\tableofcontents

\newpage

\section{Introduction}

Reaction networks and interaction networks are very important in many different settings, such as chemistry, biochemistry, cell biology, population dynamics, the study of species dynamics in an ecosystem, or the study of the spread of infectious diseases. 

\medskip 

The most common type of mathematical models for analyzing the dynamics of concentrations or populations in these systems are {\em mass-action systems}, i.e., are based on the {\em principle of mass-action kinetics}, which simply says that the rate of each reaction or interaction is proportional to the concentrations of its interacting species \cite{feinberg, HJ,  cy}.

Mass-action systems are finite-dimensional dynamical systems with polynomial right-hand-sides. They can be rigorously defined by using one of the several (equivalent) mathematical representations of a reaction network; here we use the mathematical definition of reaction networks that is based on {\em Euclidean embedded graphs}, also called {\em E-graphs}, see Section \ref{sec:preliminaryNotions}. For other uses of Euclidean embedded graphs see also \cite{MR3920470}.


It may seem at first that the polynomial dynamical systems that result from mass-action kinetics are somehow special; it actually turns out that, except for the fact that they are restricted to the positive orthant\footnote{because the variables of interest are concentrations or populations, so they cannot take on negative values}, they can give rise to absolutely {\em all} the dynamics that fully general polynomial dynamical systems can exhibit on a compact set \cite{Brunner_Craciun_2018, cy}. In particular, mass-action systems can give rise to multiple equilibria, oscillations, and chaotic dynamics. 
For example, recall that the second part of Hilbert's 16th problem involves limit cycles of polynomial dynamical systems in the plane.  It is not hard to see that if this Hilbert problem could be solved for mass-action systems, then it would also immediately be solved in full generality, i.e., for all polynomial systems.

\medskip

Here we focus on the most studied class of mass-action systems, called {\em complex-balanced systems}. This class of dynamical systems has its origins in some important connections between reaction networks and  
thermodynamics; indeed, complex-balanced systems can be regarded as generalizations of finite-dimensional versions of the Boltzmann equation \cite{cy}.  In particular, the {\em complex balance condition} is a natural generalization of Boltzmann's {\em detailed balance condition}, which is a consequence of his principle of {microscopic reversibility} \cite{cy}.

Complex-balanced systems are known to be exceptionally stable: they have a unique positive equilibrium within each linear invariant subspace, and this equilibrium (which is called a {\em complex-balanced equilibrium}) is locally stable; furthermore, complex-balanced systems {\em cannot} give rise to oscillations or chaotic dynamics \cite{feinberg, HJ,  cy}. 
Moreover, it has actually been conjectured that any complex-balanced equilibrium is a {\em globally attracting point} within its linear invariant subspace, and this conjecture (called the {\em global attractor conjecture}) has already been proved in many cases \cite{Anderson, cdss2009, Craciun_Nazarov_Pantea_2013,  Gopalkrishnan_Miller_Shiu}. 


The complex balance condition also has very strong consequences for the dynamics of associated {\em stochastic} mass-action systems. In particular, it implies that such systems (if considered for the parameter values that satisfy complex balance) have a stationary distribution, which takes on an explicit form, as a product of Poisson distributions \cite{Anderson_Craciun_Kurtz_2010}.


Moreover, the same complex balance condition has very strong consequences for large classes of {\em reaction-diffusion systems}; for example, for systems without boundary equilibria, it can be shown that such (nonlinear) reaction-diffusion systems admit classical solutions, and these solutions converge exponentially fast to the complex-balanced equilibrium \cite{Desvillettes_Fellner_Tang_2017, Craciun_Jin_Pantea_Tudorascu_2021}

\subsection{Main results}

We consider a class of nonlinear dynamical systems, introduced by Horn and Jackson (see \cite{HJ}) and called {\em complex-balanced dynamical systems} or {\em toric dynamical systems}  (see \cite{cdss2009}). They are dynamical systems associated to reaction networks, under the assumption of mass-action kinetics, such that they can give rise to {\em complex-balanced equilibria} (see Definition~\ref{def:CB}). For a detailed introduction to these topics we refer the reader to \cite{feinberg, cy}. Given a reaction network, its \emph{toric locus} (see Definition \ref{def:VkG}) is the set of positive parameters giving rise to toric dynamical systems associated to this network. Recently, researchers have shown increasing interest in the study of the toric locus (see for instance \cite{haque2022disguised}, \cite{bcs}).

It was shown in \cite{Connected} that the toric locus is connected. Moreover, a construction in  \cite{Connected} exhibits a homeomorphism from a product space $\mathcal{P}$ to the toric locus; here  $\mathcal{P}$ denotes the product space between the affine invariant polyhedron of the reaction network and its set of
complex-balanced flux vectors. Using this homeomorphism, we construct a smooth embedding and prove that the toric locus is a smoothly embedded submanifold in the ambient space. In other words, we show that the toric locus is diffeomorphic to the product space $\mathcal{P}$ mentioned above, see Theorem \ref{thm:submanifold} and Corollary \ref{cor:diffeomorphicToTheProduct}. 
Moreover, we prove that the complex-balanced equilibria depend smoothly on the parameters belonging to the toric locus (Theorem \ref{thm:smoothDependanceEquil}), respectively on the initial condition (Theorem \ref{thm:smoothDependanceinitial}).

\medskip

We now state the main results obtained in this paper. 
Our first theorem shows that given a Euclidean embedded graph, the associated toric locus is a smooth manifold.

\begin{theorem}
\label{thm:submanifold}
Let $G=(V, E)$ be a weakly reversible Euclidean embedded graph. Then the toric locus $\mathcal{V}(G)$ is a smoothly embedded submanifold of $\mathbb{R}_{>0}^{\vert E \vert}$.
\end{theorem}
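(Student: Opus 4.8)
The plan is to take the homeomorphism $\Phi\colon\mathcal P\to\mathcal V(G)$ constructed in \cite{Connected} and upgrade it to a smooth embedding; since the image of a smooth embedding is a smoothly embedded submanifold, this proves the theorem. Here $\mathcal P=P\times\mathcal J$, where $P$ is the affine invariant polyhedron (a relatively open convex subset of a translate of the stoichiometric subspace $S\subseteq\mathbb R^{n}$, with $n$ the number of species) and $\mathcal J$ is the set of complex-balanced flux vectors (a relatively open convex subset of the linear space $W$ cut out by the complex-balancing conditions). Each factor is therefore a smooth manifold, and so is $\mathcal P$. Recalling that a smooth injective immersion which is a homeomorphism onto its image is automatically a smooth embedding, and that \cite{Connected} already provides that $\Phi$ is a homeomorphism onto $\mathcal V(G)$, it suffices to prove that $\Phi$ is a smooth immersion.

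Smoothness of $\Phi$ is immediate: the map of \cite{Connected} sends $(\bx,\bJ)\in\mathcal P$ to the rate vector whose entries are the mass-action fluxes at equilibrium, $k_{y\to y'}=J_{y\to y'}\,\bx^{-y}$. As a product of integer powers of the strictly positive coordinates of $\bx$ and $\bJ$, this is smooth on $\mathcal P\subseteq\mathbb R^{n}_{>0}\times\mathbb R^{\vert E\vert}_{>0}$.

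To see that $\Phi$ is an immersion I would pass to logarithmic coordinates. Taking logarithms gives $\log\bk=\log\bJ-Y^{\top}\log\bx$, where $Y$ is the source matrix whose column indexed by the reaction $y\to y'$ is the source complex $y$; so in logarithmic coordinates $\Phi$ is the restriction to $(\log P)\times(\log\mathcal J)$ of the fixed linear map $L(\xi,\eta)=\eta-Y^{\top}\xi$. Since $\log$ is a diffeomorphism of the positive orthant, $d\Phi$ is injective if and only if $L$ is injective on the product of tangent spaces $T_{\log\bx}(\log P)\times T_{\log\bJ}(\log\mathcal J)=D_{\bx}^{-1}S\times D_{\bJ}^{-1}W$, where $D_{\bx}=\mathrm{diag}(\bx)$, $D_{\bJ}=\mathrm{diag}(\bJ)$, and $W=\ker\partial$ with $\partial$ the incidence map of $G$ on the set of complexes. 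A short computation reduces this to the purely linear-algebraic claim
$$\{\,\bv\in S\ :\ \partial\,D_{\bJ}\,Y^{\top}D_{\bx}^{-1}\bv=0\,\}=\{0\}.$$

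Establishing this claim is the main obstacle, and it is exactly where the hypotheses enter. Setting $\bu=D_{\bx}^{-1}\bv$ and $w_c=\langle y_c,\bu\rangle$ for each complex $c$, the condition $\partial D_{\bJ}Y^{\top}\bu=0$ unwinds, using that $\bJ\in\mathcal J$ has equal in- and out-flux at every complex, into the averaging identity $d_c\,w_c=\sum_{e:\,t(e)=c}J_e\,w_{s(e)}$ with $d_c=\sum_{e:\,s(e)=c}J_e>0$; that is, each $w_c$ is a convex combination, with strictly positive weights $J_e/d_c$, of the values $w_{s(e)}$ at its in-neighbors. By the maximum principle and the strong connectivity of each linkage class (guaranteed by weak reversibility), $w$ is constant on each linkage class, i.e. $\langle y_c-y_{c'},\bu\rangle=0$ whenever $c,c'$ lie in the same class. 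Since these complex differences span $S$, we get $\bu\perp S$; but $\bv=D_{\bx}\bu\in S$, so $0=\langle\bv,\bu\rangle=\sum_i x_i u_i^2$, and positivity of $\bx$ forces $\bu=0$, hence $\bv=0$. This gives injectivity of $d\Phi$. Therefore $\Phi$ is an injective immersion and a homeomorphism onto its image, hence a smooth embedding, and $\mathcal V(G)=\Phi(\mathcal P)$ is a smoothly embedded submanifold of $\mathbb R^{\vert E\vert}_{>0}$.
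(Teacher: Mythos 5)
Your proposal is correct and takes essentially the same route as the paper: both upgrade the homeomorphism of \cite{Connected} to a smooth embedding by proving it is a smooth immersion, and both establish injectivity of the differential via the same vertex weight function ($w_c=\langle \by_c, D_{\bx}^{-1}\bv\rangle$, the paper's $c(\by)$), the same maximum-principle argument on each strongly connected linkage class (the paper's Lemma \ref{lem:constant_weight_flux}), and the same final positivity argument forcing $\bv=\mathbf{0}$. Your logarithmic-coordinate linearization and the direct ``$\bu\perp\mathcal{S}$ while $\bv\in\mathcal{S}$'' conclusion are clean presentational streamlinings of the paper's explicit Jacobian computation for the extension $\hat{\varphi}$, but the underlying argument is identical.
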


The second and third theorems show that
complex-balanced equilibria of the mass-action system vary smoothly with the parameters in the toric locus and with the initial conditions, respectively.

\begin{theorem} \label{thm:smoothDependanceEquil}
Let $G=(V, E)$ be a weakly reversible Euclidean embedded graph,  let $\mathcal{V}(G)$ be its toric locus, and $\mathcal{S}$ be its stoichiometric subspace. Consider also some positive  state ${\boldsymbol{x}_0} \in \mathbb{R}_{>0}^n$, and for any $\bk \in \mathcal{V}(G)$ denote by $\bx^*(\bk)$ the complex-balanced equilibrium of the mass-action system $(G, \bk)$ within the affine invariant subspace $({\boldsymbol{x}_0} + \mathcal{S})\cap \mathbb{R}_{>0}^n$. Then $\bx^*(\bk)$  depends smoothly on $\bk$. 
\end{theorem}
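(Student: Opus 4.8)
The plan is to realize $\bx^*(\bk)$ as the unique positive zero of the mass-action vector field inside a fixed compatibility class, and then to extract its smooth dependence on $\bk$ from the implicit function theorem. First I would record the reformulation. Writing $\bY$ for the $n\times\vert V\vert$ matrix whose columns are the complexes of $G$, $\bA_{\bk}$ for the associated weighted Laplacian (Kirchhoff) matrix, which is linear in $\bk$, and $\Psi(\bx)=(\bx^{\by})_{\by\in V}$ for the vector of monomials, the mass-action system $(G,\bk)$ reads $\dot{\bx}=f_{\bk}(\bx):=\bY\bA_{\bk}\Psi(\bx)$. The map $(\bk,\bx)\mapsto f_{\bk}(\bx)$ is $C^{\infty}$ on $\mathbb{R}_{>0}^{\vert E\vert}\times\mathbb{R}_{>0}^{n}$, and $f_{\bk}(\bx)\in\mathcal{S}$ for every $\bx$. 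By the Horn--Jackson theory of complex-balanced systems \cite{HJ,feinberg,cy}, for each $\bk\in\mathcal{V}(G)$ the point $\bx^*(\bk)$ is exactly the unique positive equilibrium of $f_{\bk}$ in the compatibility class $(\bx_0+\mathcal{S})\cap\mathbb{R}_{>0}^{n}$, and moreover every positive equilibrium of $f_{\bk}$ there is complex-balanced.

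Next I would set up the implicit function theorem. Fix $\bk_0\in\mathcal{V}(G)$ and put $\bx_0^*:=\bx^*(\bk_0)$. Choosing a matrix $\bW$ whose columns form a basis of $\mathcal{S}^{\perp}$, define
\[
F(\bk,\bx):=f_{\bk}(\bx)+\bW\bW^{\top}(\bx-\bx_0)\in\mathbb{R}^{n},
\]
which is $C^{\infty}$ on $\mathbb{R}_{>0}^{\vert E\vert}\times\mathbb{R}_{>0}^{n}$. Since $f_{\bk}(\bx)\in\mathcal{S}$, $\bW\bW^{\top}(\bx-\bx_0)\in\mathcal{S}^{\perp}$, and $\mathbb{R}^{n}=\mathcal{S}\oplus\mathcal{S}^{\perp}$, we have $F(\bk,\bx)=0$ if and only if $f_{\bk}(\bx)=0$ and $\bx\in\bx_0+\mathcal{S}$; in particular $F(\bk_0,\bx_0^*)=0$. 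Decomposing domain and codomain along $\mathcal{S}\oplus\mathcal{S}^{\perp}$, the partial differential $D_{\bx}F(\bk_0,\bx_0^*)$ is block upper-triangular, with diagonal blocks the restriction of $Df_{\bk_0}(\bx_0^*)$ to $\mathcal{S}$ and the restriction of $\bW\bW^{\top}$ to $\mathcal{S}^{\perp}$; the latter is invertible because $\bW$ has full column rank. Hence $D_{\bx}F(\bk_0,\bx_0^*)$ is invertible precisely when $Df_{\bk_0}(\bx_0^*)\vert_{\mathcal{S}}$ is.

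The main obstacle is exactly this nonsingularity of the Jacobian on the stoichiometric subspace, and here the special structure of complex balance is essential. A direct computation gives $Df_{\bk}(\bx)=\bY\bA_{\bk}\,\mathrm{diag}(\Psi(\bx))\,\bY^{\top}\mathrm{diag}(1/\bx)$, and at a complex-balanced equilibrium $\bx_0^*$ the classical Horn--Jackson Lyapunov function (the pseudo-Helmholtz free energy) shows that $\bx_0^*$ is linearly asymptotically stable relative to its compatibility class; equivalently, $Df_{\bk_0}(\bx_0^*)\vert_{\mathcal{S}}$ is Hurwitz, and in particular nonsingular \cite{HJ,feinberg,cy}. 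I expect stating and applying this linear-stability fact carefully---more precisely, negative definiteness of the symmetrized Jacobian in the inner product weighted by $1/\bx_0^*$, together with the fact that its kernel meets $\mathcal{S}$ trivially---to be the technical heart of the argument.

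Finally I would conclude. By the implicit function theorem there are an open neighborhood $U\subseteq\mathbb{R}_{>0}^{\vert E\vert}$ of $\bk_0$ and a $C^{\infty}$ map $\widehat{\bx}\colon U\to\mathbb{R}_{>0}^{n}$ with $\widehat{\bx}(\bk_0)=\bx_0^*$ and $F(\bk,\widehat{\bx}(\bk))=0$ for all $\bk\in U$; after shrinking $U$, $\widehat{\bx}(\bk)$ stays in $(\bx_0+\mathcal{S})\cap\mathbb{R}_{>0}^{n}$. For $\bk\in\mathcal{V}(G)\cap U$, the point $\widehat{\bx}(\bk)$ is then a positive equilibrium of $f_{\bk}$ in this class, hence by uniqueness equals $\bx^*(\bk)$. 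Thus $\bx^*$ agrees on $\mathcal{V}(G)\cap U$ with the ambient smooth map $\widehat{\bx}$, which provides a smooth local extension of $\bx^*$ near $\bk_0$; in particular its restriction to the smooth submanifold $\mathcal{V}(G)$ (Theorem \ref{thm:submanifold}) is smooth. Since $\bk_0\in\mathcal{V}(G)$ was arbitrary, $\bx^*(\bk)$ depends smoothly on $\bk$, as claimed.
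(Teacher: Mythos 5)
Your proposal is correct, but it follows a genuinely different route from the paper's. The paper factors $\bk\mapsto\bx^*(\bk)$ as a composition $\phi\circ\hat{Q}$ of two globally defined smooth maps: $\hat{Q}$ sends $\bk$ to the unique $\bX^*\in\mathcal{S}$ such that $\exp(\bX^*)$ is complex-balanced, obtained in closed form by taking logarithms of the Kirchhoff-minor (Matrix--Tree) characterization $K_i\bx^{\byj}=K_j\bx^{\byi}$ of Proposition~\ref{prop:iff} and inverting a fixed matrix assembled from reaction vectors and a basis of $\mathcal{S}^\perp$; then $\phi(\bX^*)=\exp(\bX^*+\mathcal{S}^\perp)\cap(\bx_0+\mathcal{S})$ selects the equilibrium in the prescribed compatibility class, its smoothness coming from Birch's theorem plus an implicit-function-theorem argument. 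You instead apply the implicit function theorem once, directly to the equilibrium equations augmented by the conservation relations, $F(\bk,\bx)=f_{\bk}(\bx)+\bW\bW^{\top}(\bx-\bx_0)$; your verification that $F=0$ cuts out exactly the equilibria in $\bx_0+\mathcal{S}$, the block upper-triangular structure of $D_{\bx}F$, and the identification of the implicit solution with $\bx^*(\bk)$ via Horn--Jackson uniqueness (Theorem~\ref{thm:HJ}) are all correct. What the paper's route buys is explicitness: smoothness is read off from closed-form formulas, the toric (binomial) structure does all the work, and no spectral information about the Jacobian is needed. What your route buys is economy and a slightly stronger local conclusion: $\bx^*$ extends to a smooth map on an ambient open neighborhood of each $\bk_0$ in $\RR^{|E|}_{>0}$, so smoothness on $\mathcal{V}(G)$ follows without parametrizing the toric locus at all.

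One step needs repair, though it is repairable and you correctly flagged it as the technical heart. The inference ``the Horn--Jackson Lyapunov function shows $\bx_0^*$ is linearly asymptotically stable, equivalently $Df_{\bk_0}(\bx_0^*)\vert_{\mathcal{S}}$ is Hurwitz'' is not valid as stated: a strict Lyapunov function yields asymptotic stability but not hyperbolicity (consider $\dot{x}=-x^3$), so it cannot by itself rule out a kernel of $Df_{\bk_0}(\bx_0^*)\vert_{\mathcal{S}}$. What your argument actually requires is the sharper quadratic-form statement you allude to, and it must be proved or cited precisely, rather than attributed to the Lyapunov function. It does hold, by a computation in the spirit of the paper's own Proposition~\ref{lemma:smoothImmersion}: writing $\tilde{\bu}:=\mathrm{diag}(1/\bx_0^*)\,\bu$ and $\beta_{\by\to\by'}:=k_{\by\to\by'}(\bx_0^*)^{\by}$, one has
\begin{equation*}
\big\langle Df_{\bk_0}(\bx_0^*)\,\bu,\ \tilde{\bu}\big\rangle
=\sum_{\by\to\by'\in E}\beta_{\by\to\by'}\,\langle\by,\tilde{\bu}\rangle\,\langle\by'-\by,\tilde{\bu}\rangle
=-\tfrac{1}{2}\sum_{\by\to\by'\in E}\beta_{\by\to\by'}\,\langle\by'-\by,\tilde{\bu}\rangle^{2},
\end{equation*}
where the last equality holds because $(\beta_{\by\to\by'})_{\by\to\by'\in E}\in\mathcal{B}(G)$ (complex balance) makes the telescoping sum $\sum_{\by\to\by'\in E}\beta_{\by\to\by'}\big(\langle\by',\tilde{\bu}\rangle^{2}-\langle\by,\tilde{\bu}\rangle^{2}\big)$ vanish. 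The right-hand side is nonpositive and vanishes only if $\tilde{\bu}\in\mathcal{S}^{\perp}$; for $\bu\in\mathcal{S}$ this forces $\langle\bu,\tilde{\bu}\rangle=\sum_{j}u_j^{2}/(\bx_0^*)_j=0$, i.e.\ $\bu=\mathbf{0}$. In particular $Df_{\bk_0}(\bx_0^*)\bu=\mathbf{0}$ with $\bu\in\mathcal{S}\setminus\{\mathbf{0}\}$ is impossible, which is precisely the nonsingularity your implicit-function-theorem step needs. With this lemma supplied, your proof is complete.
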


\medskip

\begin{theorem}
\label{thm:smoothDependanceinitial}
Let $G=(V, E)$ be a weakly reversible Euclidean embedded graph and fix some $\bk \in \mathcal{V}(G)$, where $\mathcal{V}(G)$ is the toric locus of $G$. 
For any $\bx_0 \in \RR^n_{>0}$ denote by $\bx^*(\bx_0)$ the complex-balanced equilibrium of the mass-action system $(G, \bk)$ within the affine invariant subspace $({\boldsymbol{x}_0} + \mathcal{S})\cap \mathbb{R}_{>0}^n$. Then $\bx^*(\bx_0)$  depends smoothly on $\bx_0$.
\end{theorem}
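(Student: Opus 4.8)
The plan is to realize the assignment $\bx_0 \mapsto \bx^*(\bx_0)$ as the unique solution branch of a single smooth equation and to conclude smoothness via the implicit function theorem. First I would fix, once and for all, a reference complex-balanced equilibrium $\bx^*$ of $(G,\bk)$, which exists because $\bk \in \mathcal{V}(G)$. Recall from the classical theory of complex-balanced systems (see \cite{HJ, feinberg, cy}) that, since $\bk$ lies in the toric locus, the set of positive equilibria coincides with the set of complex-balanced equilibria, and in logarithmic coordinates it is
\[
\mathcal{E} = \{\bx \in \RR^n_{>0} : \ln \bx - \ln \bx^* \in \mathcal{S}^\perp\} = \{\bx^* \circ \exp(\bw) : \bw \in \mathcal{S}^\perp\},
\]
where $\circ$ denotes the componentwise product and $\exp$ acts componentwise. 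Moreover, the classical existence and uniqueness result guarantees that $\mathcal{E}$ meets each affine invariant polyhedron $(\bx_0 + \mathcal{S})\cap \RR^n_{>0}$ in exactly one point, namely $\bx^*(\bx_0)$. Hence $\bx^*(\bx_0) = \bx^* \circ \exp(\bw(\bx_0))$ for a unique $\bw(\bx_0) \in \mathcal{S}^\perp$, and it suffices to show that $\bx_0 \mapsto \bw(\bx_0)$ is smooth.

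Let $P \colon \RR^n \to \mathcal{S}^\perp$ be the orthogonal projection onto $\mathcal{S}^\perp$. Since $\bx^* \circ \exp(\bw) \in \bx_0 + \mathcal{S}$ is equivalent to $P(\bx^* \circ \exp(\bw) - \bx_0) = 0$, I would introduce the smooth map
\[
G \colon \mathcal{S}^\perp \times \RR^n_{>0} \longrightarrow \mathcal{S}^\perp, \qquad G(\bw, \bx_0) = P\bigl(\bx^* \circ \exp(\bw)\bigr) - P(\bx_0).
\]
By the previous paragraph, for each $\bx_0$ there is a unique $\bw_0 := \bw(\bx_0) \in \mathcal{S}^\perp$ with $G(\bw_0, \bx_0) = 0$. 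To invoke the implicit function theorem at $(\bw_0, \bx_0)$, I must verify that the partial differential $\partial_\bw G(\bw_0, \bx_0) \colon \mathcal{S}^\perp \to \mathcal{S}^\perp$ is invertible.

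Writing $\bx = \bx^* \circ \exp(\bw_0) \in \RR^n_{>0}$, a direct computation gives $\partial_\bw G(\bw_0, \bx_0)\,\bh = P(\bx \circ \bh)$ for $\bh \in \mathcal{S}^\perp$, that is, the restriction to $\mathcal{S}^\perp$ of $P D_\bx$, where $D_\bx$ is the diagonal matrix with entries $x_1, \dots, x_n > 0$. Using that $P$ is the orthogonal projection onto $\mathcal{S}^\perp$ and $\bh \in \mathcal{S}^\perp$, so that $\langle P(\bx \circ \bh), \bh \rangle = \langle \bx \circ \bh, \bh \rangle$, I obtain for every $\bh \in \mathcal{S}^\perp \setminus \{0\}$
\[
\langle \partial_\bw G(\bw_0, \bx_0)\,\bh, \bh \rangle = \langle \bx \circ \bh, \bh \rangle = \sum_{i=1}^n x_i h_i^2 > 0,
\]
since every $x_i > 0$. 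Thus $\partial_\bw G(\bw_0, \bx_0)$ is positive definite, in particular invertible. The implicit function theorem then yields a smooth map $\bx_0 \mapsto \bw(\bx_0)$ on a neighborhood of each point; by uniqueness of $\bw(\bx_0)$ these local solutions coincide with the globally defined one, so $\bx_0 \mapsto \bw(\bx_0)$ is smooth on all of $\RR^n_{>0}$. Consequently $\bx^*(\bx_0) = \bx^* \circ \exp(\bw(\bx_0))$ is a composition of smooth maps and depends smoothly on $\bx_0$.

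The only substantive point is the invertibility of $\partial_\bw G$, which is exactly the transversality of the equilibrium manifold $\mathcal{E}$ to the foliation by affine invariant polyhedra: the estimate above shows $D_\bx \mathcal{S}^\perp \oplus \mathcal{S} = \RR^n$ at every $\bx \in \mathcal{E}$, and this is where the positivity of the coordinates of $\bx$ is essential. This is the step I expect to require the most care; everything else is a routine application of the implicit function theorem combined with the classical existence and uniqueness of complex-balanced equilibria. I note that one could alternatively deduce the statement from the smooth parametrization underlying Theorem \ref{thm:submanifold}, but the self-contained argument above is cleaner and isolates the single key estimate.
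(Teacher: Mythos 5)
Your proof is correct and takes essentially the same route as the paper: the paper's Lemma~\ref{Phi smmoth} likewise fixes a reference equilibrium $\bX^* = \hat{Q}(\bk)$, parametrizes the equilibrium set as $\exp(\bX^* + \mathcal{S}^\perp)$, encodes membership in $\bx_0 + \mathcal{S}$ by inner products against a basis $\{\bv_1,\ldots,\bv_{n-s}\}$ of $\mathcal{S}^\perp$, and applies the implicit function theorem after checking that the Jacobian (your $\partial_\bw G$, written there in coordinates as $\bJ_{\mathbf{f}} = V\,\mathrm{diag}(\tilde{x}_1,\ldots,\tilde{x}_n)\,V^\intercal$) is invertible by exactly your positive-definiteness argument, with Birch's theorem supplying the existence and uniqueness needed to glue the local branches. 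The only differences are cosmetic: you work coordinate-free with the orthogonal projection onto $\mathcal{S}^\perp$ and multiplicative coordinates $\bx^* \circ \exp(\bw)$ rather than a chosen basis (and, incidentally, your auxiliary map named $G$ collides notationally with the graph $G=(V,E)$).
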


\subsection{Structure of the paper}

In Section \ref{sec:preliminaryNotions}, we present terminology and notations specific to dynamical systems arising from reaction networks, with an emphasis on mass-action complex-balanced dynamical systems. The latter are also referred to as toric systems. 
In Section \ref{sec:smooth_embedding}, we prove our first main result which says that the toric locus is a smoothly embedded submanifold of Euclidean space. 
Based on this result, in Section \ref{sec:smooth_dependent_toric_locus} we show that the complex-balanced equilibrium depends smoothly on the parameter values in the toric locus.
Finally, in Section \ref{sec:smooth_dependent_initial}, we prove Theorem \ref{thm:smoothDependanceinitial}
showing that the complex-balanced equilibrium depends smoothly on the initial conditions.

\section{Preliminaries} \label{sec:preliminaryNotions}

\noindent
In this preliminary section, we present classical and relevant terminology and notations from the field of reaction network theory. We mostly follow the presentation from \cite{Connected} (see also \cite{cy}). More precisely, we recall  
notions and results about deterministic dynamical systems generated by reaction networks, under the assumption of mass-action kinetics, with a focus on complex-balanced systems. Let us first begin with some useful notations.

\begin{notation}

(a) Let $\mathbb{R}_{\geq 0}^n$, respectively $\mathbb{R}_{>0}^n$ denote the sets of vectors with non-negative, respectively positive real components. Similarly, $\mathbb{Z}_{\geq 0}^n$ denotes the set of vectors with non-negative integer components. The cardinality of a set $S$ is denoted by $\vert S \vert$. Let $M \sqcup N$ denote the disjoint union of the sets $M$ and $N$.

(b) Consider two vectors $\bx = (x_1, \ldots, x_n)^{\intercal} \in \mathbb{R}_{>0}^n$ and $\by = (y_1, \ldots, y_n)^{\intercal} \in \mathbb{R}_{\ge 0}^n$. 
Throughout this paper we will use the  following multivariate notations:

\begin{equation} \notag
\begin{split}
\bx \circ \by & := (x_1 y_1, \ldots, x_n y_n)^{\intercal},
\\ {\bx}^{\by} & := x_1^{y_{1}} x_2^{y_{2}} \ldots x_n^{y_{n}},
\\ \log (\bx) & := (\log(x_1), \ldots, \log(x_n))^{\intercal},
\\ \exp (\bx) & := (\exp(x_1), \ldots, \exp(x_n))^{\intercal}.
\end{split}
\end{equation}

\end{notation}

\subsection{Mass-action dynamical systems and reaction networks}
Classically, in the definition of a reaction network, there are some important sets: the set of species, the set of complexes, and the set of reactions (see, for instance, \cite[Definition 3.1.1.]{feinberg}). In this paper, we define a reaction network to be a directed graph embedded in the Euclidean
space, as it was first presented in  \cite{MR3920470}. Due to the combinatorial and geometric aspects of this way of defining a reaction network, this equivalent definition has appeared frequently in the recent research literature (see for example \cite{cy} and references therein),  since it is very advantageous for algebraic computations.

\begin{definition}

\begin{enumerate}

\item 
Let $n$ denote the \textbf{number of species} involved in the reaction network. The \textbf{species} are denoted by $X_1,\ldots, X_n$.
\item 
The \defi{concentration} of the species $X_i$, for $i=1,\ldots, n$ is denoted by $x_i=x_i(t)$. Note that $x_i$ is a  function of time $t$. At any fixed time $t \geq 0$, we obtain a vector $\bx(t) = (x_1(t), \ldots, x_n(t))^\intercal \in \mathbb{R}^n$, that we call a \defi{state} of the system at time $t$.
\item 
A \defi{complex} is a formal linear combination of species $\{ X_i\}^n_{i=1}$, having non-negative real coefficients. A directed edge between two distinct complexes is called a \defi{reaction}.
\end{enumerate}
\end{definition}

\begin{definition}[{\cite{MR3920470}}]
\label{def:reactNetw_RateConstants}
Consider a finite directed graph $G=(V, E)$. We say that $G$ is a \defi{reaction network} (or a \defi{Euclidean embedded graph}) if it has a finite set of \defi{vertices} $V\subset \mathbb{R}^n$ and a finite set of \defi{edges} $E\subseteq V\times V$. Throughout this paper, we suppose that $G=(V, E)$ does not have isolated vertices, nor self-loops.
\begin{enumerate}
\item 
Let $m$ denote the \textbf{number of vertices}, and let $V = \{ \boldsymbol{y}_1, \ldots, \boldsymbol{y}_m\}$ be the set of vertices. There is a one-to-one correspondence between the vertices and the complexes of the reaction network, namely each $\byi \in V$ corresponds to a unique complex: the entries of the vertex $\byi \in V$ are exactly the corresponding coefficients of the associated formal linear combination.
\item 
We represent a reaction in the network by a directed edge in the graph,  connecting two vertices $\byi\in V$ to $\byj\in V$. We denote it by $\byi \rightarrow \byj\in E$. The difference vector $\byj - \byi\in\mathbb{R}^n$ is called a \defi{reaction vector}. Here $\byi$, respectively $\byj$ denote the \defi{source vertex}, respectively the \defi{target vertex}.
\end{enumerate}
\end{definition}

\begin{definition} \label{def:weaklyRev}
Consider a Euclidean embedded graph, $G=(V, E)$.
\begin{enumerate}
\item 
The connected components (or \defi{linkage classes}) of $G=(V, E)$ partition the set of vertices $V$. 
Denote by $V = V_1 \sqcup V_2 \cdots \sqcup V_\ell$, where each $V_i$ represents a connected component of $G$ and the \defi{number of connected components} is $\ell$. 
\item 
If every edge of a connected component is part of an oriented cycle, then we say that the component is \defi{strongly connected}. 
\item 
If every connected component of a graph $G=(V, E)$ is strongly connected, we say that $G$ is \defi{weakly reversible}. 
\end{enumerate}
\end{definition}

If we work in the context of mass-action kinetics (see \cite[page 28]{feinberg}), we obtain the ODE system of the form (\ref{eq:massAction}) (see Definition \ref{def:massAction} below). This is due to the fact that, under this assumption, the reaction rates are proportional to the product of the concentrations of the reactant species.

\begin{definition} \label{def:massAction}

Let $G=(V, E)$ be a Euclidean embedded graph. Under mass-action kinetics, we decorate each edge $\byi\rightarrow \byj$ with a positive \defi{reaction rate constant}, denoted by $k_{ij}$ or $k_{\byi \rightarrow \byj}$. 
Then the \defi{vector of reaction rate constants} is ${\bk}:=(k_{ij})\in\mathbb{R}_{>0}^{E}$ and the  \defi{mass-action system associated to} $(G,{\bk})$ 
on $\RR^n_{>0}$ is the following:
\begin{equation} \label{eq:massAction}
\frac{\mathrm{d}\bx}{\mathrm{d} t}= \sum_{\byi \rightarrow \byj \in E}k_{\byi \rightarrow \byj} \bx^\byi(\byj -\byi).
\end{equation}
\end{definition}

\begin{definition} \label{def:stoichiom}

Consider a Euclidean embedded graph $G = (V, E)$. Then the \defi{stoichiometric subspace} of $G$ is the following:
\begin{equation}
\mathcal{S} = \spn \{ \byj - \byi: \byi \rightarrow \byj \in E \}.
\end{equation}
It is known that any solution to \eqref{eq:massAction}
with initial condition ${\boldsymbol{x}_0}\in \mathbb{R}_{>0}^n$ and $V \subset \mathbb{Z}_{\geq 0}^n$, is confined to $({\boldsymbol{x}_0} + \mathcal{S})\cap \mathbb{R}_{>0}^n$, see \cite{cy}.
Hence, we say that $({\boldsymbol{x}_0} + \mathcal{S})\cap \mathbb{R}_{>0}^n$ is the \defi{affine invariant polyhedron} of ${\boldsymbol{x}_0}$. 
In the sequel, we use the shorter notation: 
$\mathcal{S}_{{\boldsymbol{x}_0}} := ({\boldsymbol{x}_0} + \mathcal{S})\cap \mathbb{R}_{>0}^n$.
\end{definition}

\subsection{The toric locus of a reaction network}

\noindent
In what follows we recall the definition of \emph{complex-balanced dynamical systems}
and of the associated \emph{toric locus}, which are key concepts in this paper.

\begin{definition} \label{def:CB}
Consider the mass-action system $(G, {\bk})$ given in (\ref{eq:massAction}). 
A state ${\boldsymbol{x}^*} \in \mathbb{R}_{>0}^n$ satisfying 
\begin{equation} \label{eq:ss}
\frac{\mathrm{d}\bx}{\mathrm{d} t}= \sum_{\byi \rightarrow \byj \in E}k_{\byi \rightarrow \byj} ({\boldsymbol{x}^*})^\byi(\byj -\byi) = \mathbf{0}
\end{equation}
is called a \defi{positive steady state}. 
Moreover, if at each vertex $\boldsymbol{y}_0 \in V$ we have 
\begin{equation} \label{eq:cB}
\sum_{\by_0 \to \byp \in E} k_{\by_0 \to \byp} ({\boldsymbol{x}^*})^{\by_0}
= \sum_{\by \to \by_0 \in E}k_{\by \to \by_0}
({\boldsymbol{x}^*})^{\by},
\end{equation} then the positive steady state ${\boldsymbol{x}^*} \in \mathbb{R}_{>0}^n$ is called a  \defi{complex-balanced steady state}.

Given a mass-action system $(G, \bk)$, we say that it is a \defi{complex-balanced system} (also called \defi{toric dynamical system} \cite{cdss2009}), if every steady state of the system is a complex-balanced steady state.
\end{definition}

Definition \ref{def:VkG} below was introduced in \cite[Definition 2.2]{bcs} (see also \cite[Definition 2.14]{Connected}) and it was due to the results in \cite{cdss2009}.

\begin{definition} \label{def:VkG}
Consider a Euclidean embedded graph $G=(V, E)$. The set of parameters ${\bk}\in\mathbb{R}_{>0}^{|E|}$, for which the mass-action system $(G, \bk)$ is toric or complex-balanced is called the \defi{toric locus} of the mass-action system given by the Euclidean embedded graph $G$.
We denote the toric locus by $\mathcal{V}(G) \subseteq \mathbb{R}_{>0}^{|E|}$.
\end{definition}

Besides the nice algebraic and combinatorial structure of toric dynamical systems, it is also well-known that they have strong stability properties that are desirable in many applications, as the following classical theorem has shown.  

\begin{theorem}[\cite{HJ}]
\label{thm:HJ}

Let $(G, \bk)$ be a complex-balanced dynamical system, and let $\mathcal{S}$ be its associated stoichiometric subspace. 
Denote by $\bx^* \in \RR^n_{>0}$ a steady state of $(G, \bk)$.
Then:
\begin{enumerate}[label=(\alph*)]
\item All the positive steady states are complex-balanced and there is exactly one steady state within each invariant polyhedron. 

\item For any complex-balanced steady state $\bx$ we have: $\ln \bx  \in \ln \bx^* + \mathcal{S}^\perp$.  

\item Every complex-balanced steady state is locally asymptotically stable within its invariant polyhedron. 
\end{enumerate}
\end{theorem}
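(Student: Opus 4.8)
The plan is to build the classical strict Lyapunov function of Horn and Jackson — the pseudo-Helmholtz free energy measured relative to the given complex-balanced steady state $\bx^*$ — and to read off all three conclusions from its convexity and its behaviour along trajectories. Concretely, I would set
\begin{equation*}
L(\bx) = \sum_{i=1}^{n} \bigl( x_i \ln x_i - x_i \ln x_i^* - x_i + x_i^* \bigr), \qquad \bx \in \RR^n_{>0}.
\end{equation*}
First I would record two elementary facts: the gradient is $\nabla L(\bx) = \log(\bx) - \log(\bx^*)$, and the Hessian is the diagonal matrix $\mathrm{diag}(1/x_1, \ldots, 1/x_n)$, which is positive definite throughout $\RR^n_{>0}$. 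Hence $L$ is strictly convex, and its restriction to any affine invariant polyhedron $\mathcal{S}_{\bx_0}$ is strictly convex and proper (it blows up as $\bx$ approaches the boundary), so it attains a unique minimizer in the relative interior.

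The core of the argument — and the step I expect to be the main obstacle — is the computation that $L$ decreases along solutions of \eqref{eq:massAction}. Writing $\bu := \log(\bx/\bx^*)$, abbreviating $w_{\by} := \langle \bu, \by \rangle$ for each vertex $\by$, and introducing the steady-state fluxes $a_{\byi \to \byj} := k_{\byi \to \byj}(\bx^*)^{\byi}$, a direct substitution gives
\begin{equation*}
\dot L = \langle \nabla L, \dot\bx \rangle = \sum_{\byi \to \byj \in E} a_{\byi \to \byj}\, e^{w_{\byi}} (w_{\byj} - w_{\byi}),
\end{equation*}
where I used $k_{\byi \to \byj}\bx^{\byi} = a_{\byi \to \byj} e^{w_{\byi}}$. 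I would then apply the tangent-line inequality for the convex function $\exp$, namely $e^{a}(b-a) \le e^{b} - e^{a}$ with equality if and only if $a = b$, to each summand, yielding $\dot L \le \sum_{\byi \to \byj} a_{\byi \to \byj}(e^{w_{\byj}} - e^{w_{\byi}})$. Regrouping this upper bound by vertices and invoking the complex-balance identity \eqref{eq:cB} at $\bx^*$ — which says exactly that the fluxes $a_{\byi \to \byj}$ balance at every vertex — collapses the bound to $0$. Thus $\dot L \le 0$, and the equality clause shows $\dot L = 0$ precisely when $w_{\byi} = w_{\byj}$ for every edge, i.e.\ when $\log\bx - \log\bx^* \in \mathcal{S}^\perp$; a short check confirms that any such $\bx$ is itself complex-balanced.

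With $L$ and $\dot L$ in hand, the three claims follow quickly. For (b), the equality analysis applies directly: any complex-balanced steady state $\bx$ has $\dot L = 0$, hence $\log\bx - \log\bx^* \in \mathcal{S}^\perp$. For (a), a positive steady state has $\dot\bx = \mathbf{0}$, so $\dot L = \langle \nabla L, \dot\bx\rangle = 0$, forcing it to be complex-balanced; uniqueness within $\mathcal{S}_{\bx_0}$ then follows because, by (b), every steady state is a critical point of the strictly convex, proper function $L|_{\mathcal{S}_{\bx_0}}$, and such a function has exactly one critical point, its minimizer — a Birch-type argument that simultaneously supplies existence. Finally, for (c), the minimizer $\hat\bx$ of $L|_{\mathcal{S}_{\bx_0}}$ is the unique equilibrium in the polyhedron and the unique zero of $\dot L$ there, so $L - L(\hat\bx)$ is a strict Lyapunov function on $\mathcal{S}_{\bx_0}$, and local asymptotic stability is concluded from the standard Lyapunov theorem together with LaSalle's invariance principle.
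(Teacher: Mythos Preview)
The paper does not prove this theorem at all: it is stated with a citation to Horn and Jackson and used as a black box throughout. Your sketch is essentially the classical Horn--Jackson argument via the pseudo-Helmholtz free energy, so in spirit it matches the original source the paper cites.

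One genuine imprecision to flag: your parenthetical that $L$ ``blows up as $\bx$ approaches the boundary'' is not correct. As $x_i \to 0^+$ the summand $x_i\ln x_i - x_i\ln x_i^* - x_i + x_i^*$ tends to the finite value $x_i^*$, so $L$ extends continuously to $\RR^n_{\ge 0}$. Hence properness of $L|_{\mathcal{S}_{\bx_0}}$ is not automatic, and in particular if the invariant polyhedron is unbounded or if the infimum could be attained on the boundary you need a real argument. You seem to be aware of this, since you immediately invoke a ``Birch-type argument'' for existence; just be sure to drop the boundary-blow-up claim and rely squarely on Birch's theorem (which the paper also uses, e.g.\ in the proof of Lemma~\ref{phi smmoth}) to produce the interior minimizer. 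With that correction the remainder of your outline --- the $\dot L$ computation via the tangent-line inequality, the equality characterization giving (b), the deduction of (a) from $\dot L=0$ forcing equality in every summand, and the strict Lyapunov argument for (c) --- is sound and standard.
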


In the sequel we only work in the context of $\mathcal{V}(G)$ being non-empty. Thus we assume that $G=(V, E)$ is a weakly reversible graph throughout the paper.

\section{The toric locus is a smoothly embedded submanifold}
\label{sec:smooth_embedding}

This section aims to prove Theorem \ref{thm:submanifold}.
For this purpose, we define the \emph{set of complex-balanced flux vectors} (see  Definition \ref{def:fluxVectors} and \cite[Section 4.1]{Connected}) and two maps in Theorem \ref{thm:mapTildePhi} and Definition \ref{def:mapTildeTildePhi} below.

\subsection{The set of complex-balanced flux vectors \texorpdfstring{$\mathcal{B}(G)$}{Bg}}
\label{subsec:CBFluxVectors}

\begin{definition}
\label{def:fluxVectors}

Let $G=(V, E)$ be a Euclidean embedded graph. 
\begin{enumerate}
\item[(a)] A vector $\bbeta = (\beta_{\byi \to \byj})_{\byi \to \byj \in E}$ \defi{satisfies the complex-balanced condition} on $G$, if at each vertex $\by_0 \in V$ we have
\begin{equation}
\label{def: CBF}
\sum_{\by \to \by_0 \in E} \beta_{\by \to \by_0} 
= \sum_{\by_0 \to \byp \in E} \beta_{\by_0 \to \byp}.
\end{equation} 
We denote the set of all vectors satisfying condition \eqref{def: CBF} by 
\begin{equation} \notag
\tilde{\bB} (G) :=
\{\bbeta \in \RR^{E} \mid \bbeta  \text{ satisfies the complex-balanced condition on $G$}\}.
\end{equation}

\item[(b)] Further, a vector $\bbeta = (\beta_{\byi \to \byj})_{\byi \to \byj \in E}$ is called a \defi{complex-balanced flux vector}, if $\bbeta \in \RR^{|E|}_{>0}$ and it satisfies the complex-balanced condition on $G$.
We denote the set of all complex-balanced flux vectors by 
\begin{equation} \notag
\mathcal{B}(G) := \tilde{\bB} (G) \cap \RR^{|E|}_{>0}.
\end{equation}
\end{enumerate}
\end{definition}

\begin{lemma}(\cite[Lemma 4.4]{Connected})
\label{lem:weak_reversible_BG}
Let $G=(V, E)$ be a Euclidean embedded graph. Then $\bB (G) \neq \emptyset$ if and only if $G=(V, E)$ is weakly reversible.
\end{lemma}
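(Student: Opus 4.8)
The plan is to interpret a complex-balanced flux vector $\bbeta \in \mathcal{B}(G)$ as a strictly positive \emph{conserved flow} (a circulation) on the directed graph $G$: condition \eqref{def: CBF} says exactly that at every vertex the total incoming flux equals the total outgoing flux. Under this reading the statement becomes the classical graph-theoretic fact that a finite directed graph carries a strictly positive circulation precisely when every edge lies on a directed cycle, i.e. precisely when each linkage class is strongly connected, which is the definition of weak reversibility.

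For the direction ``weakly reversible $\Rightarrow \mathcal{B}(G) \neq \emptyset$'', I would build an explicit flux vector out of cycles. Since $G$ is weakly reversible, every edge $e \in E$ lies on some simple directed cycle $C_e$. The indicator vector $\mathbf{1}_{C_e} \in \RR^{E}$, equal to $1$ on the edges of $C_e$ and $0$ elsewhere, satisfies \eqref{def: CBF}: at each vertex traversed by $C_e$ there is exactly one incoming and one outgoing cycle-edge, while at the remaining vertices both sides vanish. Setting $\bbeta := \sum_{e \in E} \mathbf{1}_{C_e}$, the sum again satisfies the (linear) complex-balanced condition, and each coordinate is at least $1$ because $e \in C_e$. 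Hence $\bbeta \in \tilde{\bB}(G) \cap \RR^{|E|}_{>0} = \mathcal{B}(G)$.

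For the converse, suppose $\bbeta \in \mathcal{B}(G)$ and fix an arbitrary edge $\byi \to \byj \in E$; I must show it lies on a directed cycle. Let $R \subseteq V$ be the set of vertices reachable from $\byj$ by directed paths, with $\byj \in R$. By construction $R$ has no outgoing edges, since any edge leaving a vertex of $R$ lands in a vertex still reachable from $\byj$. Summing \eqref{def: CBF}, written as $\sum_{\by \to \by_0}\beta_{\by \to \by_0} - \sum_{\by_0 \to \byp}\beta_{\by_0 \to \byp} = 0$, over all $\by_0 \in R$, every flux on an edge internal to $R$ appears once with each sign and cancels, so the sum collapses to the total flux on edges entering $R$ from $V \setminus R$, which must therefore equal $0$. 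Since all coordinates of $\bbeta$ are strictly positive, no edge enters $R$ from outside. But $\byj \in R$ and $\beta_{\byi \to \byj} > 0$, which forces $\byi \in R$; hence there is a directed path from $\byj$ to $\byi$, which together with $\byi \to \byj$ closes a cycle through the chosen edge. As the edge was arbitrary, every linkage class of $G$ is strongly connected, so $G$ is weakly reversible.

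The routine parts — that cycle indicators solve \eqref{def: CBF} and that $\bbeta$ has positive coordinates — are immediate. The one step deserving care is the summation over the reachable set $R$ in the converse: I must verify that the contributions of interior edges cancel exactly and that the closedness of $R$ under outgoing edges leaves only an \emph{incoming} boundary term, so that positivity of $\bbeta$ delivers the contradiction. This ``flux across a cut'' bookkeeping is the heart of the argument.
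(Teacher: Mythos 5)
Your proof is correct. Note that the paper itself does not prove this lemma --- it is quoted with a citation to \cite[Lemma 4.4]{Connected} --- so there is no in-paper argument to compare against; what you have supplied is a self-contained replacement for the citation. Both directions check out against the paper's definitions: Definition \ref{def:weaklyRev} defines strong connectedness of a component as ``every edge is part of an oriented cycle,'' which is exactly what your two arguments use. In the forward direction, the indicator vector of a simple cycle through each edge satisfies \eqref{def: CBF} (one unit in, one unit out at each visited vertex), and summing these indicators over all edges stays in the linear space $\tilde{\bB}(G)$ while making every coordinate at least $1$, so the sum lies in $\bB(G)$. In the converse, your ``flux across a cut'' computation is sound: summing \eqref{def: CBF} over the reachable set $R$ of the target vertex, interior edges cancel, outgoing boundary edges do not exist by construction of $R$, and the surviving incoming boundary term must vanish, which is impossible for a strictly positive $\bbeta$ unless the source vertex already lies in $R$; this closes a cycle through the chosen edge. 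The one stylistic difference worth flagging: the cited reference (and much of the reaction-network literature) establishes this equivalence by routing through mass-action theory --- a positive $\bbeta \in \bB(G)$ yields rate constants $k_{\byi \to \byj} = \beta_{\byi \to \byj}$ for which $(1,\ldots,1)^{\intercal}$ is a complex-balanced equilibrium, and then one invokes the classical Horn--Jackson fact that complex-balanced systems are weakly reversible, with the forward direction using existence of complex-balanced rate constants for weakly reversible networks. Your argument is purely graph-theoretic (circulations and cuts), uses no dynamical input at all, and is arguably preferable as a from-scratch proof; the literature route buys brevity only if one already has the Horn--Jackson machinery in hand.
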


Suppose $G = (V, E)$ is a weakly reversible Euclidean embedded graph. Lemma \ref{lem:weak_reversible_BG} shows that both sets $\bB (G)$ and $\tilde{\bB} (G)$ are non-empty. Since the set $\tilde{\bB} (G)$ is formed by finitely many linear restrictions given in \eqref{def: CBF}, the set $\tilde{\bB} (G)$ is a linear subspace of $\mathbb{R}^{|E|}$.
Note that the set $\mathcal{B}(G)$ is obtained by intersecting $\tilde{\bB} (G)$ with the positive orthant $\mathbb{R}_{>0}^{E}$, therefore $\mathcal{B}(G)$ forms a pointed cone in $\mathbb{R}_{>0}^{|E|}$ (see \cite[Lemma 4.5]{Connected}).

\begin{lemma}
\label{lem:constant_weight_flux}
Let $G=(V, E)$ be a weakly reversible Euclidean embedded graph and let $\bbeta = (\beta_{\by \rightarrow \by'})_{\by \rightarrow \by' \in E}$ be a (non-zero) complex-balanced flux vector in $\bB (G)$.
Suppose for every vertex $\by \in V$, there exists a corresponding weight $c(\by) \in \RR$, such that
\begin{equation} \label{tilde_beta}
\widetilde{\beta}_{\by \rightarrow \by'} = c(\by) \beta_{\by \rightarrow \by'}.
\end{equation}
If $\widetilde{\bbeta} = (\widetilde{\beta}_{\by \rightarrow \by'})_{\by \rightarrow \by' \in E} \in \tilde{\bB} (G)$, then the weights $c ( \cdot )$ are constant within each connected component of $G$, i.e., for any two vertices $\by, \by'$ in the same connected component of $G$, $c(\by) = c (\by')$.
\end{lemma}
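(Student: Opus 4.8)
The plan is to reduce the hypothesis $\widetilde{\bbeta} \in \tilde{\bB}(G)$ to a single linear identity at each vertex, and then read off the conclusion by means of a discrete maximum principle. Fixing a vertex $\by_0 \in V$ and writing out the complex-balanced condition \eqref{def: CBF} for $\widetilde{\bbeta}$, I would substitute $\widetilde{\beta}_{\by \rightarrow \by'} = c(\by)\beta_{\by \rightarrow \by'}$. Every edge $\by_0 \to \byp$ leaving $\by_0$ carries the same weight $c(\by_0)$, so the outgoing sum factors as $c(\by_0)\sum_{\by_0 \to \byp \in E}\beta_{\by_0 \to \byp}$, while the incoming sum is $\sum_{\by \to \by_0 \in E} c(\by)\beta_{\by \to \by_0}$. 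Using that $\bbeta$ itself satisfies \eqref{def: CBF} at $\by_0$, i.e.\ $\sum_{\by_0 \to \byp \in E}\beta_{\by_0 \to \byp} = \sum_{\by \to \by_0 \in E}\beta_{\by \to \by_0}$, these combine into the key relation
\begin{equation}
\sum_{\by \to \by_0 \in E} \bigl(c(\by) - c(\by_0)\bigr)\,\beta_{\by \to \by_0} = 0,
\end{equation}
valid at every vertex $\by_0 \in V$.

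Next I would exploit positivity. Since $\bbeta \in \bB(G)$, each coefficient $\beta_{\by \to \by_0}$ is strictly positive. Fix a connected component $W$ of $G$ and let $M := \max_{\by \in W} c(\by)$, attained at some vertex $\by_0 \in W$. In the key relation at $\by_0$ each factor $c(\by) - c(\by_0) = c(\by) - M$ is nonpositive, while each weight $\beta_{\by \to \by_0}$ is positive; since the weighted sum vanishes, every term must vanish, forcing $c(\by) = M$ for every in-neighbour $\by$ of $\by_0$ (that is, every $\by$ with $\by \to \by_0 \in E$). Note that, because $G$ is weakly reversible and has no isolated vertices, $\by_0$ lies on an oriented cycle and therefore has at least one incoming edge, so this step is non-vacuous.

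Finally I would propagate the equality over all of $W$ using strong connectivity. As $G$ is weakly reversible, $W$ is strongly connected, so for any $\bw \in W$ there is an oriented path $\bw = \bv_0 \to \bv_1 \to \cdots \to \bv_r = \by_0$ ending at $\by_0$. Traversing this path backwards and applying the maximum-attaining step repeatedly — each $\bv_i$ achieving the maximum forces its in-neighbour $\bv_{i-1}$ to achieve it as well — gives $c(\bw) = M$. Hence $c$ is constant (equal to $M$) on $W$, which is the assertion.

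The algebraic reduction to the key relation is routine; the only genuinely delicate point is the propagation step, where it is essential that weak reversibility upgrades mere connectedness to strong connectedness. This guarantees simultaneously that the maximizing vertex $\by_0$ has an incoming edge and that every vertex of $W$ is reverse-reachable from $\by_0$. Without strong connectivity the maximum principle would spread the equality only along the reverse-reachability set of $\by_0$, which in general is a proper subset of $W$, and the conclusion could fail.
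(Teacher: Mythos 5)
Your proof is correct and is essentially the paper's argument: both hinge on the same computation (combining the balance condition for $\widetilde{\bbeta}$ with that for $\bbeta$ at a vertex maximizing $c$), the strict positivity of $\bbeta$, and the strong connectivity of each component furnished by weak reversibility. The only difference is packaging — the paper argues by contradiction, using strong connectivity to produce an edge entering the maximizing set $V_M$ from its complement and deriving a strict inequality against balance, whereas you run the same discrete maximum principle directly and propagate the equality $c(\by)=M$ backwards along oriented paths.
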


\begin{proof}
First, we consider the case when $G=(V, E)$ has a single connected component. 
In this case, it is equivalent to showing that $c(\by)$ is a constant function with respect to any vertex $\by \in V$.

Let us argue by contradiction. 
 Suppose that there exist two distinct vertices $\by, \by' \in V$, such that $c(\by)\neq c(\by')$.
Assume that $M = \max\limits_{\by \in V} c(\by)$, we define the following set as
\begin{equation}
V_M = \{ \by \in V \mid c (\by) = M \}.
\end{equation}
We note that $\emptyset \neq V_M \subsetneq V$. Since $G=(V, E)$ is weakly reversible, there are some reactions between vertices in $V_M$ and vertices in $V \setminus V_M$. Thus we can find two vertices $\by_0 \in V_M$ and $\hat{\by} \in V \setminus V_M$, such that 
\begin{equation} \label{vertex y0}
M = c(\by_0) > c(\hat{\by})
\ \text{ and } \
\hat{\by} \rightarrow \by_0 \in E.
\end{equation} 
Since $\widetilde{\bbeta} \in \widetilde{\bB}(G)$, for each vertex $\by \in V$ we have
\begin{equation} \label{eq:tilde_beta_1}
\sum_{\by \rightarrow \by' \in E} \widetilde{\beta}_{\by \rightarrow \by'}
= \sum_{\by''\rightarrow \by \in E} \widetilde{\beta}_{\by'' \rightarrow \by}.
\end{equation}
From $\bbeta\in \bB(G)$ and \eqref{tilde_beta}, we deduce
\begin{equation} \label{eq:tilde_beta_2}
\sum_{\by_0\rightarrow \by' \in E}\widetilde{\beta}_{\by_0\rightarrow \by'}
= c(\by_0) \sum_{\by_0\rightarrow \by' \in E} \beta_{\by_0\rightarrow \by'}
= c(\by_0) \sum_{\by'' \rightarrow \by_0 \in E} \beta_{\by'' \rightarrow \by_0}.
\end{equation}
Using \eqref{vertex y0} and \eqref{eq:tilde_beta_2}, we obtain
\begin{equation} \notag
\sum_{\by_0\rightarrow \by' \in E}\widetilde{\beta}_{\by_0\rightarrow \by'} 
= \sum_{\by'' \rightarrow \by_0 \in E} c(\by_0) \beta_{\by'' \rightarrow \by_0}
> \sum_{\by'' \rightarrow \by_0 \in E} c(\by'') \beta_{\mathbf{y''}\rightarrow \by_0}
= \sum_{\by'' \rightarrow \by_0 \in E}\widetilde{\beta}_{\by'' \rightarrow \by_0},
\end{equation}
which contradicts with \eqref{eq:tilde_beta_1}. This concludes the proof in the case of a single connected component.

\medskip

Next, we consider the case when $G=(V, E)$ has $\ell > 1$ connected components $\{ V_i \}^{\ell}_{i=1}$.
We claim that $c(\by)$ is a constant function with respect to vertices in the same connected component.

Let us argue by contradiction.  Suppose that there exists a connected component $V_1$ with two distinct vertices $\by, \by' \in V_1$, such that $c(\by) \neq c(\by')$.
Follow the steps in the single connected component case and assume $M = \max\limits_{\by \in V_1} c(\by)$. Then we can find two vertices $\by_0, \hat{\by} \in V_1$, such that 
\begin{equation} \label{vertex y0_L1}
M = c(\by_0) > c(\hat{\by})
\ \text{ and } \
\hat{\by} \rightarrow \by_0 \in E.
\end{equation}
Since $\widetilde{\bbeta} \in \widetilde{\bB}(G)$, by Definition \ref{def:fluxVectors}, for each vertex $\by \in V$ we have:
\begin{equation} \label{tilde B G L1}
\sum_{\by \rightarrow \by' \in E} \widetilde{\beta}_{\by \rightarrow \by'}
= \sum_{\by'' \rightarrow \by \in E} \widetilde{\beta}_{\by'' \rightarrow \by}.
\end{equation}
Using $\bbeta\in \bB(G)$, \eqref{tilde_beta} and \eqref{vertex y0_L1}, we have
\begin{equation} \notag
\sum_{\by_0\rightarrow \by' \in E}\widetilde{\beta}_{\by_0\rightarrow \by'} 
 = c(\by_0) \sum_{\by_0\rightarrow \by' \in E} \beta_{\by_0\rightarrow \by'}
= c(\by_0) \sum_{\by \rightarrow \by_0 \in E} \beta_{\by \rightarrow \by_0}.
\end{equation}
Note that $\by \rightarrow \by_0 \in E$ if and only if $\by \in V_1$. Thus we derive
\begin{equation} \notag
\sum_{\by_0\rightarrow \by' \in E}\widetilde{\beta}_{\by_0\rightarrow \by'} 
> \sum_{\by \rightarrow \by_0 \in E} c(\by) \beta_{\by \rightarrow \by_0 \in E}
=\sum_{\by \rightarrow \by_0} \widetilde{\beta}_{\by \rightarrow \by_0},
\end{equation}
which contradicts \eqref{tilde B G L1}. 
Therefore, we conclude this lemma.
\end{proof}

\subsection{A smooth map to the toric locus}

We start by defining a map $\varphi$ from the product space $\mathcal{S}_{\bx_0} \times \bB (G)$ to the toric locus $\mathcal{V}(G)$, which was first introduced in \cite[Definition 4.9]{Connected}. 

\begin{theorem}(\cite[Theorem 4.8]{Connected})
\label{thm:mapTildePhi}
Let $G = (V, E)$ be a weakly reversible Euclidean embedded graph. Then for any $\bx_0\in\RR^n_{>0}$, there exists a topological embedding: 
\begin{equation}
\varphi: \mathcal{S}_{\bx_0} \times \bB (G) \rightarrow \RR^{|E|}
\end{equation}
such that for $(\bx, \bbeta) \in \mathcal{S}_{\bx_0} \times \bB (G)$,
\begin{equation} \label{eq:mapPhi}
    \varphi(\bx, \bbeta):=\bigg (\frac{\beta_{\by \rightarrow \by'}}{\bx^{\by}}\bigg )_{\by \rightarrow \by' \in E}.
\end{equation}
Moreover, $\varphi$ yields a homeomorphism between
the product space $\mathcal{S}_{\bx_0} \times \bB (G)$ and the toric locus $\mathcal{V}(G)$.
\end{theorem}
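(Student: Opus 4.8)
The plan is to verify in turn that $\varphi$ is (i) well defined as a map into $\mathcal{V}(G)$, (ii) continuous, (iii) a bijection onto $\mathcal{V}(G)$, and (iv) has continuous inverse; only the last point is delicate.

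\medskip

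\emph{Forward direction and continuity.} Fix $(\bx,\bbeta)\in\mathcal{S}_{\bx_0}\times\bB(G)$ and set $\bk:=\varphi(\bx,\bbeta)$, so that $k_{\by\to\by'}=\beta_{\by\to\by'}/\bx^{\by}$. I would substitute $\bx$ into the complex-balance equations \eqref{eq:cB}: at each vertex $\by_0$ the monomials $\bx^{\by_0}$ on the left and $\bx^{\by}$ on the right cancel the denominators, so \eqref{eq:cB} reduces to exactly the flux identity \eqref{def: CBF} satisfied by $\bbeta\in\bB(G)$. Hence $\bx$ is a complex-balanced steady state of $(G,\bk)$, and by the Horn--Jackson theory (see Theorem~\ref{thm:HJ} and \cite{HJ}) the existence of one such state forces the whole system $(G,\bk)$ to be complex-balanced, i.e.\ $\bk\in\mathcal{V}(G)$. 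Continuity of $\varphi$ is immediate, since each coordinate $\beta_{\by\to\by'}/\bx^{\by}$ is a ratio of continuous functions with strictly positive denominator on $\RR^n_{>0}$.

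\medskip

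\emph{Bijectivity.} For surjectivity, take $\bk\in\mathcal{V}(G)$; by Theorem~\ref{thm:HJ}(a) there is a unique complex-balanced steady state $\bx^*\in\mathcal{S}_{\bx_0}$, and putting $\beta_{\by\to\by'}:=k_{\by\to\by'}(\bx^*)^{\by}>0$ turns the complex-balance identity \eqref{eq:cB} for $\bx^*$ into precisely \eqref{def: CBF}, so $\bbeta\in\bB(G)$ and $\varphi(\bx^*,\bbeta)=\bk$. For injectivity, suppose $\varphi(\bx_1,\bbeta_1)=\varphi(\bx_2,\bbeta_2)$. Comparing coordinates gives $\beta_{2,\by\to\by'}=c(\by)\,\beta_{1,\by\to\by'}$ with $c(\by):=\bx_2^{\by}/\bx_1^{\by}>0$; since $\bbeta_1\in\bB(G)$ and $\bbeta_2\in\tilde{\bB}(G)$, Lemma~\ref{lem:constant_weight_flux} forces $c(\cdot)$ to be constant on each connected component, which (taking logarithms and using that within-component vertex differences span $\mathcal{S}$) means $\log\bx_2-\log\bx_1\in\mathcal{S}^\perp$. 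As also $\bx_2-\bx_1\in\mathcal{S}$, pairing the two vectors yields $\langle\log\bx_2-\log\bx_1,\,\bx_2-\bx_1\rangle=0$; but this sum is termwise nonnegative by monotonicity of $\log$ and vanishes only when $\bx_1=\bx_2$ (Birch's argument). Then $c\equiv 1$ and $\bbeta_1=\bbeta_2$.

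\medskip

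\emph{Continuity of the inverse (the main obstacle).} The inverse sends $\bk$ to $(\bx^*(\bk),\bbeta(\bk))$ with $\bbeta(\bk)=\big(k_{\by\to\by'}(\bx^*(\bk))^{\by}\big)$, so it suffices to show $\bk\mapsto\bx^*(\bk)$ is continuous. I would argue sequentially: for $\bk_n\to\bk$, any subsequential limit $\bx^*(\bk_{n_j})\to\bar\bx\in\RR^n_{>0}$ is a complex-balanced steady state of $(G,\bk)$ lying in $\mathcal{S}_{\bx_0}$ (because $\bx_0+\mathcal{S}$ is closed and \eqref{eq:cB} is continuous in $(\bx,\bk)$), hence $\bar\bx=\bx^*(\bk)$ by uniqueness, and the full sequence converges. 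The crux is the relative compactness of $\{\bx^*(\bk_n)\}$ \emph{inside} the open orthant: one must rule out escape to $\partial\RR^n_{\ge 0}$ or to infinity while $\bk_n$ stays in a compact subset of $\RR^{|E|}_{>0}$. This non-escape is exactly the technical heart, and it is why no cheap compactness shortcut (e.g.\ a direct properness or invariance-of-domain argument) applies, since $\mathcal{S}_{\bx_0}\times\bB(G)$ is noncompact and its dimension differs from $|E|$. I would establish it by realizing $\bx^*(\bk)$ as the unique minimizer over $\mathcal{S}_{\bx_0}$ of the strictly convex, coercive Lyapunov function of the complex-balanced system and invoking continuity of the $\operatorname{argmin}$ (Berge's maximum theorem), or equivalently by using the pointed-cone structure of $\bB(G)$ to bound the fluxes $k_{n,\by\to\by'}(\bx^*(\bk_n))^{\by}$ away from the recession directions. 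Once $\bx^*$, and hence $\bbeta$, is shown continuous in $\bk$, the map $\varphi$ is a continuous bijection with continuous inverse onto $\mathcal{V}(G)$, i.e.\ a topological embedding whose image is the toric locus.
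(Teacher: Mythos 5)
Your steps (i)--(iii) are correct and complete: the monomial cancellation argument shows $\varphi$ lands in $\mathcal{V}(G)$ (using Theorem \ref{thm:HJ}(a) to pass from one complex-balanced steady state to the whole system being complex balanced), surjectivity is the same cancellation read backwards for the unique equilibrium in $\mathcal{S}_{\bx_0}$, and your injectivity argument via Lemma \ref{lem:constant_weight_flux} followed by the pairing $\langle\log\bx_2-\log\bx_1,\,\bx_2-\bx_1\rangle=0$ is exactly the right mechanism. For context: the paper does not prove this theorem at all --- it is imported from \cite[Theorem 4.8]{Connected} --- but the machinery that makes the inverse continuous is reproduced in Sections \ref{sec:smooth_dependent_toric_locus} and \ref{sec:smooth_dependent_initial}.

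The gap is in step (iv), precisely the step you yourself flag as the technical heart. Your primary proposal --- realize $\bx^*(\bk)$ as the minimizer of ``the strictly convex, coercive Lyapunov function of the complex-balanced system'' and invoke Berge's maximum theorem --- is circular. The Horn--Jackson Lyapunov function of $(G,\bk)$ is $\sum_i \big(x_i\ln x_i - x_i - x_i\ln x^*_i(\bk)\big)$ up to constants: its very definition requires a complex-balanced equilibrium of $(G,\bk)$, so the joint continuity of the objective in $(\bx,\bk)$ demanded by Berge's theorem is exactly the continuity statement you are trying to prove; no formulation of this function depending on $\bk$ alone in an explicitly continuous way is offered (and when $\mathcal{S}_{\bx_0}$ is unbounded one would additionally need coercivity uniform in $\bk$). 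The fallback suggestion, bounding the fluxes $k_{n,\by\to\by'}(\bx^*_n)^{\by}$ away from recession directions of the pointed cone $\bB(G)$, is too vague to evaluate and does not obviously control $\bx^*_n$, since these quantities confound the behavior of $\bk_n$ and of $\bx^*_n$. The non-circular route, used in \cite{Connected} and reproduced in this paper, is: (a) the Kirchhoff matrix-tree constants $K_i(\bk)$ of Proposition \ref{prop:iff} are explicit polynomials in $\bk$, strictly positive on $\RR^{|E|}_{>0}$, and the binomial equations $K_i\bx^{\byj}=K_j\bx^{\byi}$ determine the $\mathcal{S}$-component of $\ln\bx^*$ as the solution of a fixed linear system whose right-hand side $\ln(K_i/K_j)$ is continuous (indeed smooth) in $\bk$ (Lemma \ref{Q hat smooth}); (b) Birch's theorem plus the implicit function theorem show that $\bX\mapsto\exp(\bX+\mathcal{S}^\perp)\cap(\bx_0+\mathcal{S})$ is smooth (Lemma \ref{phi smmoth}). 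Composing gives continuity of $\bk\mapsto\bx^*(\bk)$ directly, with no compactness argument needed. Without some such device producing a continuously varying equilibrium explicitly from $\bk$, your non-escape step remains unproven, and the homeomorphism claim is not established.
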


Note that in \cite[Definition 4.9]{Connected} the target of the map $\varphi$ was $\mathcal{V}(G)$, whereas in Theorem \ref{thm:mapTildePhi} above we use  $\RR^{|E|}$. 

Recall that our goal is to prove Theorem \ref{thm:submanifold}. To this end, we will show in Proposition \ref{lemma:smoothImmersion} that the map $\varphi$ is a smooth immersion. Then by Theorem \ref{thm:mapTildePhi} we will conclude that $\varphi$ is a smooth embedding, in Proposition \ref{prop:PhiIsEmbedding}.

Next, let us introduce the map $\hat{\varphi}$, which is an extension of the map $\varphi$ from Theorem \ref{thm:mapTildePhi} and it will be used in the proof of Proposition \ref{lemma:smoothImmersion}.

\begin{definition} \label{def:mapTildeTildePhi}
Let $G = (V, E)$ be a weakly reversible Euclidean embedded graph. Define the following map: 
\begin{equation}
\hat{\varphi} : \RR^n_{>0} \times \RR^{|E|} \rightarrow \RR^{|E|}
\end{equation} 
such that for $(\bx, \bbeta)\in \RR^n_{>0} \times \RR^{|E|}$,
\begin{equation} \label{hat phi} 
\hat{\varphi} (\bx, \bbeta)
:= \bigg (\frac{\beta_{\by \rightarrow \by'}}{\bx^{\by}}\bigg )_{\by \rightarrow \by' \in E}.
\end{equation}
\end{definition}

\begin{remark} \label{rmk:extension_map}
Note that $\mathcal{S}_{\bx_0} \times \bB (G) \subset \RR^n_{>0} \times \RR^{|E|}$, thus we have $\text{Dom} (\varphi) \subset \text{Dom} (\hat{\varphi})$. It is also clear that $\hat{\varphi} \big|_{\mathcal{S}_{\bx_0} \times \bB (G)} = \varphi$, therefore $\hat{\varphi}$ is an extension of $\varphi$.
\end{remark}

\begin{example}\label{ex:Segre}
The toric locus of the reversible Euclidean embedded graph (reaction network) depicted in Figure \ref{fig:4thchamber} is the non-negative real part of the {\em Segre variety}, given by the equation $k_{34}k_{21}=k_{43}k_{12}$, $k_{ij}\in\mathbb{R}$, $k_{ij}>0$; see \cite{shiu10} and \cite[Section 5]{bcs}. The Segre variety is the classical well-known embedding of the product of the projective line with itself in $\mathbb{P}^3$. It is a {\em smooth} quadric (a determinantal surface); see \cite[page 478]{MR1288523}. Note that Segre embeddings appear frequently in Applied Algebraic Geometry, for example in Algebraic Statistics (see the independence model for two binary
random variables in \cite[Example 8.27]{MR4423369}) and in Quantum Mechanics and Quantum Information Theory (see \cite{gharahi2020fine}).
\end{example}

\begin{figure}[H]
    \centering
   \begin{tikzpicture}
\draw[semithick,lightgray] (-0.1,-0.1) grid (3.6,3.6);
      \draw[-{Stealth},semithick] (-0.2,0) -- (3.6,0) node[below] {};
      \draw[-{Stealth},semithick] (0,-0.2) -- (0,3.6) node[left] {};
      \node[label=right:{$\boldsymbol{y}_1$}] at (2.6,-0.37) {};
    \node[label=right:{}](M) at (2.85,0) {};
      \node[label=right:{$\boldsymbol{y}_2$}] at (1.9,1.15) {};
      \node[label=right:{}](N) at (1.85,1) {};
      \node[label=right:{$\boldsymbol{y}_3$}] at (0.85,2.2) {};
      \node[label=right:{}](P) at (0.85,2) {};
      \node[label=left:{$\boldsymbol{y}_4$}](Q) at (-0.15,3) {};
      \draw[fill] (M) circle (1.5pt);
      \draw[fill] (N) circle (1.5pt);
      \draw[fill] (P) circle (1.5pt);
      \draw[fill] (Q) circle (1.5pt);
      \def\shf{0.2ex};
      \draw[->,transform canvas={xshift=\shf,yshift=\shf},thick, red] (M) -- (N);
      \draw[<-,transform canvas={xshift=-\shf,yshift=-\shf},thick, red] (M) -- (N);
      \def\shf{0.2ex};
      \draw[->,transform canvas={xshift=\shf,yshift=\shf},thick, red] (P) -- (Q);
      \draw[<-,transform canvas={xshift=-\shf,yshift=-\shf},thick, red] (P) -- (Q);
\end{tikzpicture}
    \caption{The Euclidean embedded graph $G$ (i.e., reaction network) from Example \ref{ex:Segre}. The toric locus of the mass-action system given by $G$ is the non-negative real part of the {\em Segre variety}, given by the polynomial equation $k_{34}k_{21}=k_{43}k_{12}$. In Figure \ref{fig:segreAffine} we depict the surface from the affine chart $k_{43}=1$.}
    \label{fig:4thchamber}
\end{figure}
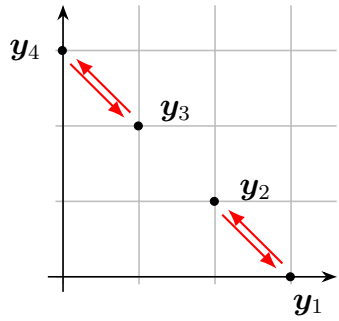
\begin{figure}[H]
    \centering
    \includegraphics[height=6cm]{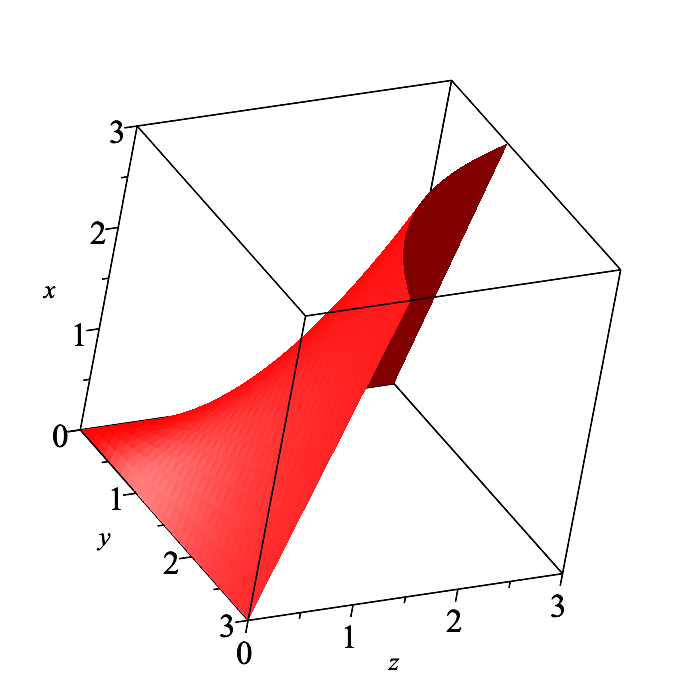}
    \includegraphics[height=6cm]{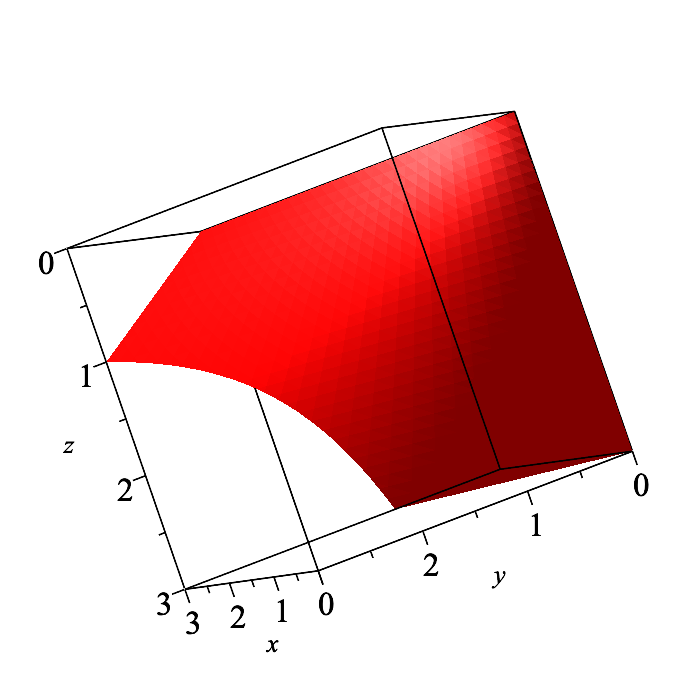}
    \caption{The toric locus of the mass-action system given by the Euclidean embedded graph $G$ from Figure \ref{fig:4thchamber}. See Example \ref{ex:Segre}.}
    \label{fig:segreAffine}
\end{figure}

\subsection{Proof of Theorem \ref{thm:submanifold}}
We first recall some definitions in differential topology, which appear in the main theorems.

\begin{definition}
\label{def:embedding} 
Given two smooth manifolds $A$ and $B$, let $f: A \to B$ be a smooth map. Then $f$ is an \defi{immersion} if its derivative is everywhere injective. See \cite[page 77]{leeSmooth}. 
A \defi{topological embedding} is a homeomorphism onto its image. See \cite[page 54]{leeTopological}. 
A \defi{smooth embedding} is an immersion which is also a topological embedding. Further, a \defi{(smoothly) embedded submanifold} is the image of a smooth embedding. See \cite[page 85]{leeSmooth}.
\end{definition}

For a textbook in differential topology we refer the reader to \cite{leeSmooth} (see also \cite{MR2680546}).

\medskip

The following proposition is the most important step towards proving Theorem \ref{thm:submanifold}. 

\begin{proposition} 
\label{lemma:smoothImmersion}
The map $\varphi$ from Theorem \ref{thm:mapTildePhi} is a smooth immersion.
\end{proposition}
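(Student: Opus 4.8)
The plan is to verify the two defining properties of a smooth immersion separately: first that $\varphi$ is a smooth map of smooth manifolds, and then that its differential is injective at every point of the domain.

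First I would record the manifold structure of the domain. The set $\mathcal{S}_{\bx_0} = (\bx_0 + \mathcal{S})\cap\RR^n_{>0}$ is an open subset of the affine subspace $\bx_0 + \mathcal{S}$, hence a smooth manifold whose tangent space at each point is $\mathcal{S}$. Likewise $\bB(G) = \tilde{\bB}(G)\cap\RR^{|E|}_{>0}$ is an open subset of the linear space $\tilde{\bB}(G)$, so its tangent space at each point is $\tilde{\bB}(G)$. Thus $\mathcal{S}_{\bx_0}\times\bB(G)$ is a smooth manifold with tangent space $\mathcal{S}\times\tilde{\bB}(G)$ everywhere. For smoothness of $\varphi$ itself I would invoke the extension $\hat{\varphi}$ of Definition \ref{def:mapTildeTildePhi}: each coordinate $\beta_{\by\to\by'}/\bx^{\by}$ is a quotient of smooth functions on $\RR^n_{>0}\times\RR^{|E|}$ whose denominator $\bx^{\by} = x_1^{y_1}\cdots x_n^{y_n}$ is strictly positive, so $\hat{\varphi}$ is smooth; since $\varphi = \hat{\varphi}|_{\mathcal{S}_{\bx_0}\times\bB(G)}$ by Remark \ref{rmk:extension_map}, $\varphi$ is smooth.

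Next I would compute the differential. Fix $(\bx,\bbeta)$ and a tangent vector $(\bu,\widetilde{\bbeta})\in\mathcal{S}\times\tilde{\bB}(G)$. Differentiating the coordinate $\beta_{\by\to\by'}\bx^{-\by}$ and factoring out $\bx^{-\by} > 0$ gives
$$\big(D\varphi(\bx,\bbeta)[\bu,\widetilde{\bbeta}]\big)_{\by\to\by'} = \bx^{-\by}\Big(\widetilde{\beta}_{\by\to\by'} - \beta_{\by\to\by'}\sum_{i=1}^n \frac{y_i u_i}{x_i}\Big).$$
The crucial structural observation is that the sum $\sum_{i=1}^n y_i u_i/x_i = \langle \by, \bw\rangle$, where $\bw := \bu\circ\bx^{-1}$ (i.e.\ $w_i = u_i/x_i$), depends on the edge $\by\to\by'$ only through its source vertex $\by$. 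I would then prove injectivity, which is the heart of the argument. Suppose $D\varphi(\bx,\bbeta)[\bu,\widetilde{\bbeta}] = 0$. Since each $\bx^{-\by} > 0$, this is equivalent to $\widetilde{\beta}_{\by\to\by'} = c(\by)\,\beta_{\by\to\by'}$ for every edge, with the per-vertex weight $c(\by) := \langle\by,\bw\rangle$. This is exactly the hypothesis of Lemma \ref{lem:constant_weight_flux} — and $\widetilde{\bbeta}\in\tilde{\bB}(G)$ since it is a tangent vector to $\bB(G)$ — so the lemma yields that $c(\cdot)$ is constant on each connected component of $G$. In particular $\langle\byj-\byi,\bw\rangle = c(\byj)-c(\byi) = 0$ for every edge $\byi\to\byj\in E$; as the reaction vectors span $\mathcal{S}$, this gives $\bw\in\mathcal{S}^\perp$. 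But $\bu\in\mathcal{S}$, so $0 = \langle\bu,\bw\rangle = \sum_{i=1}^n u_i^2/x_i$, and positivity of the $x_i$ forces $\bu = 0$; then $\bw = 0$, hence $c\equiv 0$ and $\widetilde{\bbeta} = 0$. Thus the kernel is trivial and $\varphi$ is an immersion.

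The main obstacle is precisely this injectivity step: the derivative couples the $\bx$- and $\bbeta$-directions, and a direct rank computation is unwieldy. The right move is to reorganize the kernel equation into the weighted form $c(\by)\beta_{\by\to\by'}$ so that Lemma \ref{lem:constant_weight_flux} applies, reducing the problem to showing $\bw\in\mathcal{S}^\perp$; the argument then closes by the positive-definiteness of the pairing $\langle\bu,\bu\circ\bx^{-1}\rangle$ on $\mathcal{S}$, which is what rules out any nonzero tangent vector in the stoichiometric directions.
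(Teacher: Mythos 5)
Your proposal is correct and follows essentially the same route as the paper: smoothness via the extension $\hat{\varphi}$, reduction of the kernel equation to the weighted form $\widetilde{\beta}_{\by\to\by'} = c(\by)\beta_{\by\to\by'}$, an appeal to Lemma \ref{lem:constant_weight_flux}, and a positive-definiteness argument in the $\bx$-directions to force the tangent vector to vanish. Your closing step (deducing $\bw\in\mathcal{S}^\perp$ and pairing against $\bu\in\mathcal{S}$) is just a cleaner, coordinate-free packaging of the paper's expansion of $\tilde{\bv}$ in spanning reaction vectors, which yields the same sum $\sum_j v_j^2/x_j = 0$.
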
 

\begin{proof}

For simplicity, throughout the proof we use the following notation:
\[
M := \text{Dom} (\varphi) 
= \mathcal{S}_{\bx_0}\times \bB(G)
\]
Recall that $\mathcal{S}_{\bx_0} = ({\boldsymbol{x}_0} + \mathcal{S})\cap \mathbb{R}_{>0}^n$ and $\mathcal{B}(G) = \tilde{\bB} (G) \cap \RR^{|E|}_{>0}$.
Since $\mathcal{S}$ and $\widetilde{\bB}(G)$ are linear subspaces of $\RR^n$ and $\RR^{|E|}$ respectively, we derive that both $\mathcal{S}_{\bx_0}$ and $\mathcal{B}(G)$ are smooth manifolds, and thus $M$ is a smooth manifold. 
Note that for any
$\bx \in \mathcal{S}_{\bx_0} \subset \mathbb{R}_{>0}^n$ and
$\by \in \mathbb{R}_{\geq 0}^n$, we obtain $\bx^{\by} > 0$.
Hence $\varphi$ is a smooth map on $M$.

By Definition \ref{def:embedding}, it suffices to show that the derivative of 
$\varphi$ is injective. 
Since both $\mathcal{S}$ and $\widetilde{\bB}(G)$ are linear subspaces, the tangent space of $M$ at the point $(\bx, \bbeta) \in M$ is
\begin{equation} \label{tangent space}
\mathrm{T}_{(\bx, \bbeta)} M 
= \mathcal{S}\times \widetilde{\bB}(G).
\end{equation}
Note that both $\mathcal{S}$ and $\widetilde{\bB}(G)$ are independent with respect to $\bx$ and $\bbeta$, hence we let $\mathcal{T} := \mathrm{T}_{(\bx, \bbeta)} M$ denote the tangent space of $M$ at any point.
Then we consider the differential of $\varphi$ at the point $(\bx, \bbeta) \in M$ as follows:
\begin{equation}
\mathrm{d} \varphi_{(\bx, \bbeta)}:
\mathcal{T} \rightarrow \RR^{|E|}.
\end{equation}

We claim $\mathrm{d} \varphi$ at any point in $M$ has only trivial kernel in $\mathcal{T}$, that is, for any $(\bx, \bbeta) \in M$
\begin{equation}\label{eq:ker}
\mathrm{ker}(\mathrm{d} \varphi_{(\bx, \bbeta)}) \cap \mathcal{T}
= \{ \mathbf{0} \}.
\end{equation}

Assuming the claim, we deduce that the kernel of $\mathrm{d} \varphi$ at any point in $M$ only contains the zero vector, thus the derivative of $\varphi$ is injective. Since $\varphi$ is also smooth, by Definition \ref{def:embedding} we conclude that $\varphi$ is a smooth immersion. Thus it remains to prove the claim (\ref{eq:ker}).

\medskip

To simplify our computations, we consider $\hat{\varphi}$ in Definition \ref{def:mapTildeTildePhi} and work with the differential of $\hat{\varphi}$, such that for any $(\bx, \bbeta) \in M$
\begin{equation}
\mathrm{d} \hat{\varphi}_{(\bx, \bbeta)} :
\mathbb{R}^n \times \RR^{|E|}\rightarrow \RR^{|E|}.
\end{equation}
From Remark \ref{rmk:extension_map}, $\hat{\varphi}$ is an extension of $\varphi$ with $\hat{\varphi} \big|_{M} = \varphi$, which implies
$\mathrm{d} \hat{\varphi}\big|_{\mathcal{T}} = \mathrm{d} \varphi$ at any point in $M$. 
Thus we only need to prove that for any $(\bx, \bbeta) \in M$ and any $\bv \in \mathcal{T} \ \backslash \ \{\mathbf{0} \}$,
\begin{equation} \notag
\mathrm{d} \hat{\varphi}_{(\bx, \bbeta)} (\bv) \neq \mathbf{0}.
\end{equation}

By contradiction, suppose there exists some 
$\bv \in \mathcal{T} \ \backslash \ \{\mathbf{0} \}$ such that 
$\mathrm{d} \hat{\varphi} (\bv) = \mathbf{0}$. 
Here we write $\bv$ as 
\begin{equation} \label{eq:v=tvtb}
\bv = (\tilde{\bv}, \widetilde{\bbeta}) \in \mathcal{T} =\mathcal{S} \times \widetilde{\bB}(G),
\end{equation}
and we denote 
$\tilde{\bv} = (v_1, \ldots, v_n) \in \mathcal{S}$ and $\widetilde{\bbeta} = (\widetilde{\beta}_1, \ldots, \widetilde{\beta}_{\vert E\vert}) \in \widetilde{\bB}(G)$.

Let us focus on $\mathrm{d} \hat{\varphi}_{(\bx, \bbeta)}$, that is the Jacobian matrix of
$\hat{\varphi}$ at $(\bx, \bbeta)$.
By Definition \ref{def:mapTildeTildePhi}, the Jacobian matrix is given by
\begin{equation}
\bJ_{\hat{\varphi}} (\bx, \bbeta) 
= \left[ \frac{\partial \hat{\varphi}}{\partial \bx}, \frac{\partial \hat{\varphi}}{\partial \bbeta} \right].
\end{equation}
Consider a fixed reaction 
$\by \rightarrow \by' \in E$ and let us  compute the corresponding row vector 
\begin{equation} \label{eq:yy'row}
\left( \frac{\partial \hat{\varphi}_{\by \rightarrow \by'}}{\partial \bx}, \
\frac{\partial \hat{\varphi}_{\by \rightarrow \by'}}{\partial \bbeta} \right) 
\end{equation}
in the Jacobian matrix $\bJ_{\hat{\varphi}} (\bx, \bbeta)$.
For the first $n$ entries in the row given by \eqref{eq:yy'row}, which correspond to the derivatives of $\hat{\varphi}_{\by\longrightarrow \by'}$  with respect to $\bx$, we obtain
\begin{equation} \label{diff first n}
\frac{\partial \hat{\varphi}_{\by\longrightarrow \by'}}{\partial x_j}
= \frac{\beta_{\by \rightarrow \by'}}{\bx^{\by}}\frac{-y_j}{x_j}
\ \text{ for } j = 1, \ldots, n,
\end{equation}
where $\bx = (x_1, \ldots, x_n) \in \RR^n_{>0}$ and $\by = (y_1, \ldots, y_n)  \in \RR^n_{\geq 0}$.
Furthermore, the last $\vert E \vert$ entries of the row in \eqref{eq:yy'row} are obtained by taking the derivatives of $\hat{\varphi}_{\by\longrightarrow \by'}$  with respect to $\bbeta$. Thus we  compute for $k = 1, \ldots, \vert E \vert$,
 \begin{equation} \label{diff last E}
     \frac{\partial \hat{\varphi}_{\by \rightarrow \by'}}{\partial \beta_k} = 
     \begin{cases}
    \frac{1}{{\bx}^{\by}}, & \text{ if } {k = {\by} \rightarrow {\by}'}, \\
    0, & \text{ otherwise.}
    \end{cases}
\end{equation}
Using the assumption $\mathrm{d} \hat{\varphi} (\bv)= \mathbf{0}$, \eqref{eq:v=tvtb}, \eqref{diff first n} and \eqref{diff last E}, we get that
\begin{equation} \label{dv = 0}
\begin{split}
\frac{\partial \hat{\varphi}_{\by\rightarrow \by'}} {\partial (\bx, \bm\beta)} \cdot \bv
& = \sum^{n}_{j=1} \frac{\partial \hat{\varphi}_{\by\rightarrow \by'}}{\partial x_j} v_j + \sum^{\vert E \vert}_{k=1} \frac{\partial \hat{\varphi}_{\by\rightarrow \by'}}{\partial \beta_{k}} v_{n+k}
\\& = \frac{\beta_{\by\rightarrow \by'}}{\bx^{\by}}\frac{-y_1}{x_1} v_1 + \ldots + \frac{\beta_{\by\rightarrow \by'}}{\bx^{\by}}\frac{-y_n}{x_n} v_n
+ \frac{1}{\bx^{\by}} \widetilde{\beta}_{\by\rightarrow \by'}
= 0.
\end{split}
\end{equation}
Multiplying ${\bx}^{\by} > 0$ on both sides of \eqref{dv = 0}, we obtain for any $\by \rightarrow \by'\in E$
\begin{equation} \label{eq:betaBeta}
     \beta_{\by \rightarrow \by'}  \big ( \sum_{i=1}^n \frac{y_i}{x_i} v_i \big ) = \widetilde{\beta}_{\by \rightarrow \by'}.
\end{equation}

Here we introduce the weight functions $c (\by)$ as follows: 
\begin{equation} \label{def: c}
c(\by) := \sum_{i=1}^n \frac{y_i}{x_i} v_i \ \text{ for any } \by \in V.
\end{equation}
Applying \eqref{eq:betaBeta} on all reactions in $G = (V, E)$, we have 
\begin{equation} \label{eq:betaBeta 1}
c(\by) \beta_{\by \rightarrow \by'}
= \widetilde{\beta}_{\by \rightarrow \by'} \ \text{ for any } \by \rightarrow \by'\in E.
\end{equation}
By \eqref{eq:v=tvtb}, we have $\widetilde{\bbeta} = (\widetilde{\beta}_1, \ldots, \widetilde{\beta}_{\vert E\vert}) \in \widetilde{\bB}(G)$.
This shows that, under the weight function $c(\by)$, the vector $\bbeta \in \bB (G)$ is mapped to $\tilde{\bB} (G)$.

Suppose $G = (V, E)$ has $\ell \geq 1$ connected components $\{ V_1, \cdots, V_{\ell} \}$.
By Lemma \ref{lem:constant_weight_flux}, we obtain that for any $1 \leq i \leq \ell$, the weight function $c(\cdot)$ is constant within each connected component $V_i \subset V$.
Hence there is a set of real numbers $\{ k_i \}^{l}_{i=1} \in \mathbb{R}$, such that
\begin{equation} \label{c constant Li}
c(\by) = k_i 
\ \text{ for any } \by \in V_i \subset V.
\end{equation}
From \eqref{def: c} and \eqref{c constant Li}, for any $\by  = (y_1, \cdots, y_n) \in V_i$ we get
\begin{equation} \label{c constant Li_2}
\sum_{j=1}^n \frac{y_j}{x_j} v_j = k_i.
\end{equation}
Then we pick one vertex in each connected component, denoted by $\by^i = (y^i_{1}, \cdots, y^i_{n}) \in V_i$ with $1 \leq i \leq \ell$.
From \eqref{c constant Li_2}, for any $1 \leq i \leq \ell$ and $\by \in V_i$,
\begin{equation} \label{delta c = 0 Li}
c(\by) - c(\by^i)
= \sum_{j=1}^n \frac{y_j}{x_j} v_j - \sum_{j=1}^n \frac{y^i_{j}}{x_j} v_j 
= \sum_{j=1}^n \frac{(y_j - y^i_{j})}{x_j} v_j = 0.
\end{equation}
We also note that $\mathcal{S} = \spn \{ \by - \by^i : \by \in V_i 
\ \text{with} \ 
1 \leq i \leq \ell \}$. 
By \eqref{eq:v=tvtb}, we have $\tilde{\bv} = (v_1, \ldots,v_n) \in \mathcal{S}$ and thus $\tilde{\bv}$ can be written as
\begin{equation} \label{tilde v Li}
\tilde{\bv} = \sum\limits^{\ell}_{i=1} \sum\limits_{\by \in V_i} w_{\by} (\by - \by^i)
\ \text{ with } \
w_{\by} \in \mathbb{R}.
\end{equation}
For every vertex $\by \in V$, we multiply $c(\by) - c(\by^i)$ in \eqref{delta c = 0 Li} by $w_{\by}$, and then do the summation of all such multiplications. Hence we derive that
\begin{equation} 
\label{weight d c = 0 Li 0}
0 = \sum\limits^{\ell}_{i=1} \sum\limits_{\by \in V_i} w_{\by} \big( c(\by) - c(\by^i) \big)
= \sum\limits^{\ell}_{i=1} \sum\limits_{\by \in V_i} w_{\by} \bigg( \sum_{j=1}^n \frac{(y_j - y^i_{j})}{x_j} v_j \bigg).
\end{equation}
Inputting \eqref{tilde v Li} into \eqref{weight d c = 0 Li 0}, we have
\begin{equation} 
\label{weight d c = 0 Li}
\begin{split}
0 & = \sum\limits^{\ell}_{i=1} \sum\limits_{\by \in V_i} w_{\by} \bigg( \sum_{j=1}^n \frac{(y_j - y^i_{j})}{x_j} \Big( \sum\limits^{\ell}_{i=1} \sum\limits_{\by \in V_i} w_{\by} (y_{j} - y^i_{j}) \Big) \bigg)
\\& = \sum\limits^{\ell}_{i=1} \sum\limits_{\by \in V_i} \bigg( \sum_{j=1}^n \frac{w_{\by} (y_j - y^i_{j})}{x_j} \Big( \sum\limits^{\ell}_{i=1} \sum\limits_{\by \in V_i} w_{\by} (y_{j} - y^i_{j}) \Big) \bigg)
\\& = \sum_{j=1}^n
\Bigg(
\sum\limits^{\ell}_{i=1} \sum\limits_{\by \in V_i} \frac{w_{\by} (y_{j} - y^i_{j})}{x_j} \Big( \sum\limits^{\ell}_{i=1} \sum\limits_{\by \in V_i} w_{\by} (y_{j} - y^i_{j}) \Big) \Bigg)
\\& = \sum_{j=1}^n \frac{\Big( \sum\limits^{\ell}_{i=1} \sum\limits_{\by \in V_i} w_{\by} (y_{j} - y^i_{j}) \Big)^2 }{x_j}.
\end{split}
\end{equation}
Since $\bx = (x_{1}, \cdots, x_{n}) \in \mathbb{R}_{>0}^n$, \eqref{weight d c = 0 Li} implies that
\begin{equation}
v_j = \sum\limits^{\ell}_{i=1} \sum\limits_{\by \in V_i} w_{\by} (y_{j} - y^i_{j}) = 0
\ \text{ for } \ 
j = 1, \ldots, n.
\end{equation}
This shows $\tilde{\bv} = (v_1, \ldots,v_n) = \mathbf{0}$.
Then we apply $\tilde{\bv} = \mathbf{0}$ on \eqref{eq:betaBeta 1}, and obtain 
\begin{equation}
\widetilde{\beta}_{\by \rightarrow \by' } = c(\by) \beta_{\by \rightarrow \by'}
= \big( \sum_{i=1}^n \frac{y_i}{x_i} v_i \big) \beta_{\by \rightarrow \by'}
= 0
\ \text{ for any } \
\by \rightarrow \by' \in E,
\end{equation}
which indicates $\widetilde{\bbeta} = (\widetilde{\beta}_1, \ldots, \widetilde{\beta}_{\vert E\vert}) = \mathbf{0}$.
However, this contradicts with $\bv = (\tilde{\bv}, \widetilde{\bbeta}) \neq \mathbf{0}$ and we prove the claim.
\end{proof}

\begin{proposition} \label{prop:PhiIsEmbedding}
The map $\varphi: \mathcal{S}_{\bx_0} \times \bB (G) \rightarrow \RR^{|E|}$ from Theorem \ref{thm:mapTildePhi} is a smooth embedding.
\end{proposition}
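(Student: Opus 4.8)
The plan is to read off this statement directly from the definition of a smooth embedding together with the two results that have already been established. Recall from Definition \ref{def:embedding} that a smooth embedding is precisely a smooth immersion which is simultaneously a topological embedding. Thus the entire task reduces to verifying that $\varphi$ has both of these two properties, and each property is supplied by a separate prior result.

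First I would invoke Proposition \ref{lemma:smoothImmersion}, which states exactly that $\varphi$ is a smooth immersion; this is the substantive half of the argument, since establishing injectivity of $\mathrm{d}\varphi$ at every point required the full Jacobian computation \eqref{diff first n}--\eqref{eq:betaBeta 1} and the structural input of Lemma \ref{lem:constant_weight_flux}. Second, I would invoke Theorem \ref{thm:mapTildePhi}, which asserts that $\varphi$ is a topological embedding (indeed, it yields a homeomorphism between $\mathcal{S}_{\bx_0}\times\bB(G)$ and the toric locus $\mathcal{V}(G)$, hence in particular a homeomorphism onto its image).

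Having both ingredients in hand, the conclusion follows mechanically: a map that is both a smooth immersion and a topological embedding is a smooth embedding by Definition \ref{def:embedding}. Therefore $\varphi: \mathcal{S}_{\bx_0}\times\bB(G)\rightarrow\RR^{|E|}$ is a smooth embedding, which is what we wanted to prove.

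In this sense there is no genuine obstacle remaining at this stage; all the difficulty has been front-loaded into Proposition \ref{lemma:smoothImmersion} (the immersion) and into the cited Theorem \ref{thm:mapTildePhi} (the topological embedding). The only point worth flagging is to make sure the two results are stated for the \emph{same} map with the \emph{same} codomain $\RR^{|E|}$ --- which is why the remark following Theorem \ref{thm:mapTildePhi} deliberately enlarged the target of $\varphi$ from $\mathcal{V}(G)$ to $\RR^{|E|}$, so that ``immersion'' and ``topological embedding'' refer to $\varphi$ viewed as a map into the ambient Euclidean space and can be combined without any reconciliation of domains or codomains. Once that consistency is observed, assembling the two properties into the definition of a smooth embedding completes the proof.
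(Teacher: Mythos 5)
Your proposal is correct and follows exactly the same route as the paper's own proof: invoke Proposition \ref{lemma:smoothImmersion} for the immersion property, Theorem \ref{thm:mapTildePhi} for the topological embedding, and combine them via Definition \ref{def:embedding}. Your additional observation about the codomain being $\RR^{|E|}$ rather than $\mathcal{V}(G)$ is a sensible consistency check, but it does not change the argument.
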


\begin{proof}
By Definition \ref{def:embedding}, we need to prove that $\varphi$ is a smooth immersion and a topological embedding. 
Theorem \ref{thm:mapTildePhi} shows the map $\varphi$ is a topological embedding, as well as a homeomorphism onto its image, $\mathcal{V}(G)$.
Finally, by Proposition \ref{lemma:smoothImmersion}, we get that $\varphi$ is a smooth immersion.
\end{proof}

Now we are ready to prove the most important result of our paper:
\begin{proof}[Proof of Theorem \ref{thm:submanifold}]
The proof follows directly from Definition \ref{def:embedding} and Proposition \ref{prop:PhiIsEmbedding}.
\end{proof}

\begin{corollary}\label{cor:diffeomorphicToTheProduct}
    The toric locus is diffeomorphic to the product space between the affine invariant polyhedron and the set of complex-balanced flux vectors.
\end{corollary}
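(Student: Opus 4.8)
The plan is to observe that all the substantive work has already been carried out, and that the corollary is a formal consequence of the standard fact in differential topology that a smooth embedding is a diffeomorphism onto its image. Concretely, I would first invoke Proposition~\ref{prop:PhiIsEmbedding}, which establishes that the map $\varphi: \mathcal{S}_{\bx_0} \times \bB(G) \to \RR^{|E|}$ is a smooth embedding, i.e., a smooth immersion that is simultaneously a topological embedding.

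Next I would recall the relevant theorem (see \cite[Chapter~5]{leeSmooth}): if $f: A \to B$ is a smooth embedding between smooth manifolds, then its image $f(A)$ is an embedded submanifold of $B$, and $f$ restricts to a diffeomorphism from $A$ onto $f(A)$ endowed with this submanifold smooth structure. I would apply this to $f = \varphi$, $A = \mathcal{S}_{\bx_0}\times\bB(G)$, and $B = \RR^{|E|}$. By Theorem~\ref{thm:mapTildePhi} the image of $\varphi$ is exactly the toric locus $\mathcal{V}(G)$, and by Theorem~\ref{thm:submanifold} this image carries the structure of a smoothly embedded submanifold of $\RR^{|E|}_{>0}$.

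Combining these, $\varphi$ yields a diffeomorphism
\begin{equation} \notag
\varphi: \mathcal{S}_{\bx_0} \times \bB(G) \xrightarrow{\ \sim\ } \mathcal{V}(G).
\end{equation}
Since $\mathcal{S}_{\bx_0}$ is by definition the affine invariant polyhedron of the network and $\bB(G)$ is the set of complex-balanced flux vectors, this exhibits the toric locus as diffeomorphic to the product space $\mathcal{P} = \mathcal{S}_{\bx_0}\times\bB(G)$, which is precisely the assertion of the corollary.

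I do not expect any genuine obstacle at this stage: the immersion property was the difficult part and is supplied by Proposition~\ref{lemma:smoothImmersion}, while the homeomorphism onto $\mathcal{V}(G)$ is supplied by Theorem~\ref{thm:mapTildePhi}. The only minor point to state carefully is that the smooth structure on $\mathcal{V}(G)$ for which $\varphi$ is a diffeomorphism is the one it inherits as an embedded submanifold of $\RR^{|E|}_{>0}$ (rather than some a~priori unrelated structure), which is exactly the content guaranteed by the embedding theorem cited above. Hence the proof reduces to a one-line citation of Proposition~\ref{prop:PhiIsEmbedding} together with the standard embedding-to-diffeomorphism principle.
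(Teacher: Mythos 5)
Your proposal is correct and follows essentially the same route as the paper: the paper's own proof is a one-line citation of Theorem~\ref{thm:submanifold} together with \cite[Proposition 5.2]{leeSmooth} (Images of Embeddings as Submanifolds), which is exactly the embedding-to-diffeomorphism principle you invoke via Proposition~\ref{prop:PhiIsEmbedding} and Theorem~\ref{thm:mapTildePhi}. Your extra remark that the relevant smooth structure on $\mathcal{V}(G)$ is the induced submanifold structure is a careful articulation of what that cited proposition guarantees, not a departure from the paper's argument.
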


\begin{proof}
    This follows directly from Theorem \ref{thm:submanifold} and \cite[Proposition 5.2]{leeSmooth} (Images of Embeddings as Submanifolds).
\end{proof}

\section{Complex-balanced equilibria depend smoothly on the parameters}
\label{sec:smooth_dependent_toric_locus}

In this section, we prove Theorem  \ref{thm:smoothDependanceEquil}, i.e., we show that the complex-balanced equilibria vary smoothly with the parameters (also called reaction rate constants) $\bk$ in the toric locus.  

Let us introduce the map $\hat{Q}$ in Definition \ref{def:mapStep1}. We will show that $\hat{Q}$ is a smooth map.

\begin{definition} \label{def:mapStep1}

Let $G=(V, E)$ be a weakly reversible Euclidean embedded graph and let $\mathcal{S}$ be its associated stoichiometric subspace.
Define the following map:
\begin{equation}
\hat{Q}: \mathcal{V}(G) \rightarrow \mathbb{R}^n,
\end{equation}
such that 
\begin{equation} \label{def:hat_Q}
\hat{Q}(\bk) := \bX^*,
\end{equation}
where $\bX^* \in \mathcal{S}$ and $\exp  (\bX^*)$ is a complex-balanced equilibrium.
\end{definition}

We first show that the map $\hat{Q}$ in Definition \ref{def:mapStep1} is well-defined and smooth. 
To this end, we introduce the Kirchoff matrix and Proposition \ref{prop:iff} below to explain the connection between complex-balanced equilibria and the reaction rates.

\smallskip

Consider a mass-action system $(G, \bk)$. 
Given a vertex $\by_i \in V$, suppose $\by_i$ belongs to a connected component $V_1$ with $|V_1| = m_1$. We construct the
$m_1 \times m_1$ \textbf{Kirchoff matrix} $A_{\bk}$ as follows:
\begin{equation} \label{def: Ak}
[A_{\bk}]_{ji} :=
\begin{cases}
k_{\byi \rightarrow \byj}, & \ \text{if } \  \by_i, \by_j \in V_1 \ \text{and } \byi \rightarrow \byj \in E \\[5pt]
-\sum\limits_{\byi \rightarrow \byj \in E} k_{\byi \rightarrow \byj}, & \ \text{if } \ i =j, \\[5pt]
0, & \ \text{otherwise}.
\end{cases}
\end{equation}
Denote by $K_i$ the minor of the entry in the $i$-th row and the $i$-th column of $A_{{\bk}}$. 
The following proposition (see \cite[Section 2]{cdss2009} and \cite[Proposition 3.9]{Connected}) gives a characterization of the complex-balanced equilibria.

\begin{proposition}[{\cite[Section 2]{cdss2009},\cite[Proposition 3.9]{Connected}}]
\label{prop:iff}
Let $(G,{\bk})$ be a weakly reversible mass-action system, with $\ell$ connected components. 
For any two vertices $\byi$ and $\byj$ in $G$, consider the equation:
\begin{equation} \label{eq:binom}
K_i \bx^{\byj}-K_j \bx^{\byi}=0.
\end{equation}
Then the following are equivalent:

(i) Equations (\ref{eq:binom}) are satisfied for every pair of vertices belonging to the same connected component of $G$.

(ii) $\bx$ is a complex-balanced equilibrium for the reaction rate vector ${\bk}$.
\end{proposition}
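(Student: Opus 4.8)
The plan is to translate the complex-balanced condition into a single linear-algebraic statement about the kernel of the Kirchhoff matrix, and then to identify that kernel explicitly via the Matrix-Tree Theorem. First I would fix a state $\bx \in \RR^n_{>0}$ and introduce the monomial vector $c = (c_1, \ldots, c_m)$ with $c_i := \bx^{\byi}$. Comparing directly with the definition \eqref{def: Ak}, the $j$-th coordinate of $A_{\bk}\, c$ equals $\sum_{\byi \to \byj \in E} k_{\byi \to \byj}\, c_i - c_j \sum_{\byj \to \byp \in E} k_{\byj \to \byp}$, so the single equation $(A_{\bk}\, c)_j = 0$ is precisely the complex-balanced condition \eqref{eq:cB} at the vertex $\byj$. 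Hence (ii) holds if and only if $A_{\bk}\, c = \mathbf{0}$. Since $A_{\bk}$ is block-diagonal with one block per connected component, this system decouples, and it suffices to work inside a single strongly connected component $V_1$ with $m_1$ vertices.

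Next I would determine the kernel of the block of $A_{\bk}$ associated to $V_1$. By construction every column of this block sums to zero, so $\mathbf{1}^{\intercal} A_{\bk} = \mathbf{0}$ and the block is rank-deficient. The essential input is the Matrix-Tree Theorem: because $G$ is weakly reversible, $V_1$ is strongly connected, so the minors $K_i$ are all strictly positive (each $K_i$ equals a sum of products of reaction rate constants over the spanning trees of $V_1$ rooted at $\byi$), the rank of the block is exactly $m_1 - 1$, and the positive vector $K = (K_1, \ldots, K_{m_1})$ spans the one-dimensional kernel, i.e. $A_{\bk}\, K = \mathbf{0}$.

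Finally I would combine the two steps. On $V_1$ the condition $A_{\bk}\, c = \mathbf{0}$ is equivalent to $c$ being a scalar multiple of $K$; and since every $K_i > 0$, this is in turn equivalent to the proportionality relations $K_j\, c_i = K_i\, c_j$ for all $\byi, \byj \in V_1$, which are exactly the binomial equations \eqref{eq:binom} after substituting $c_i = \bx^{\byi}$. Running this equivalence over every connected component yields (i) $\Leftrightarrow$ (ii).

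I expect the main obstacle to be the Matrix-Tree step, namely establishing that the kernel is exactly one-dimensional with a strictly positive spanning vector $K$. For a general weighted digraph the cofactor identity merely produces some kernel vector, whereas both the precise rank count and the positivity of the $K_i$ rely essentially on the strong connectivity guaranteed by weak reversibility; this is the content imported from \cite{cdss2009}. Once that is in place, the remainder is routine bookkeeping with the column-sum-zero structure of $A_{\bk}$.
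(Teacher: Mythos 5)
The paper itself gives no proof of this proposition: it is imported as a black box, by citation, from \cite[Section 2]{cdss2009} and \cite[Proposition 3.9]{Connected}. Your argument is correct and is essentially the proof found in that cited source: rewriting the complex-balanced condition \eqref{eq:cB} as $A_{\bk}\,c=\mathbf{0}$ with $c_i=\bx^{\byi}$, decoupling over the block-diagonal (one block per connected component), and invoking the Matrix-Tree Theorem together with strong connectivity to identify the kernel of each block as the one-dimensional span of the cofactor vector $(K_1,\ldots,K_{m_1})$. The only nitpick is a sign convention: the diagonal minors of $A_{\bk}$ as literally defined in \eqref{def: Ak} carry a uniform factor $(-1)^{m_1-1}$ relative to the positive rooted-spanning-tree sums, but since this sign is the same for every $i$ it is harmless --- nonvanishing and equality of signs are all that the proportionality argument and the binomials $K_i\bx^{\byj}-K_j\bx^{\byi}$ require.
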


The next Lemma \ref{Q hat smooth} shows the smoothness of the map $\hat{Q}$ defined in \eqref{def:hat_Q}. A similar conclusion was  obtained in \cite[Corollary 3.13]{Connected}.

\begin{lemma}[\cite{Connected}] \label{Q hat smooth}
The map $\hat{Q}$ from Definition \ref{def:mapStep1} is well-defined and smooth.
\end{lemma}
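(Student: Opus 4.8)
The plan is to establish the two assertions separately: first that $\hat{Q}$ is well-defined, using the description of the set of complex-balanced equilibria furnished by Theorem~\ref{thm:HJ}, and then that it is smooth, by converting the binomial equations of Proposition~\ref{prop:iff} into a linear system in the logarithmic variable $\bX := \log \bx$ and solving it explicitly in terms of $\bk$.

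For well-definedness, fix $\bk \in \mathcal{V}(G)$. Since $(G,\bk)$ is complex-balanced, Theorem~\ref{thm:HJ} guarantees that it possesses at least one complex-balanced equilibrium $\bx^*$ and that every complex-balanced equilibrium $\bx$ satisfies $\log \bx \in \log \bx^* + \mathcal{S}^\perp$. Thus, in logarithmic coordinates, the set of complex-balanced equilibria is exactly the affine subspace $\log \bx^* + \mathcal{S}^\perp$. Because $\RR^n = \mathcal{S} \oplus \mathcal{S}^\perp$, every coset of $\mathcal{S}^\perp$ meets $\mathcal{S}$ in precisely one point. Hence there is a unique $\bX^* \in \mathcal{S}$ with $\exp(\bX^*)$ complex-balanced, namely the orthogonal projection of $\log \bx^*$ onto $\mathcal{S}$, so $\hat{Q}(\bk) = \bX^*$ is well-defined.

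For smoothness I would route through Proposition~\ref{prop:iff}. A state $\bx$ is complex-balanced for $\bk$ if and only if $K_i \bx^{\byj} = K_j \bx^{\byi}$ for every pair of vertices $\byi, \byj$ in a common connected component, where each minor $K_i$ of the Kirchoff matrix $A_{\bk}$ is a polynomial in the entries of $\bk$. By the Matrix--Tree theorem (as used in \cite{cdss2009}), for a weakly reversible graph each such minor is, within its strongly connected component, a sum over spanning trees rooted at $\byi$ of products of positive rate constants; in particular $K_i > 0$, so $\log K_i$ is a smooth function of $\bk$. Taking logarithms in the binomial relations and writing $\bX = \log \bx$, the complex-balanced condition becomes the linear system $(\byj - \byi)\cdot \bX = \log K_j - \log K_i$. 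In particular each edge $\byi \to \byj \in E$ yields such an equation, and the reaction vectors $\{\byj - \byi\}$ span $\mathcal{S}$ by $\mathcal{S} = \spn\{\byj - \byi : \byi \to \byj \in E\}$.

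To extract $\bX^* \in \mathcal{S}$ smoothly, I would pick edges whose reaction vectors $r_1, \dots, r_d$ form a basis of $\mathcal{S}$ (with $d = \dim \mathcal{S}$) and write $\bX^* = \sum_{a=1}^d t_a r_a$. The edge constraints $\langle r_b, \bX^* \rangle = \log K_{j_b} - \log K_{i_b} =: c_b(\bk)$ then read $\sum_a \langle r_b, r_a\rangle\, t_a = c_b(\bk)$, a linear system whose coefficient matrix $\Gamma$, with $\Gamma_{ba} = \langle r_b, r_a\rangle$, is the Gram matrix of a basis and is therefore a fixed symmetric positive-definite (hence invertible) matrix independent of $\bk$. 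Solving gives $t = \Gamma^{-1} c(\bk)$, which is smooth in $\bk$ because each $c_b(\bk) = \log K_{j_b}(\bk) - \log K_{i_b}(\bk)$ is; consequently $\bX^* = \sum_a t_a r_a$ depends smoothly on $\bk$. The existence of a complex-balanced equilibrium established above guarantees that the $\bX^*$ pinned down by this basis satisfies all remaining pair relations, so it coincides with the point selected in the well-definedness step. I expect the main obstacle to be precisely this smoothness half: namely, upgrading the abstractly-defined projection $\bX^*$ to the explicit closed form $\Gamma^{-1} c(\bk)$, which requires both the positivity of the minors $K_i$ (so that $\log K_i$ is defined and smooth) and the reduction to a constant-coefficient invertible system via the spanning property of the reaction vectors.
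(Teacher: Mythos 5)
Your proposal is correct, and it follows the same overall strategy as the paper (well-definedness via Theorem \ref{thm:HJ} and the decomposition $\RR^n = \mathcal{S} \oplus \mathcal{S}^\perp$; smoothness by taking logarithms of the binomial relations from Proposition \ref{prop:iff}, using positivity and smoothness of the tree-constants $K_i$, and solving a linear system with a \emph{fixed} invertible coefficient matrix), but your linear-algebra implementation is genuinely different. The paper forms the matrix $\Delta \boldsymbol{y}$ of consecutive-vertex differences within each component, extracts $s = \dim \mathcal{S}$ independent rows, and --- when $s < n$ --- enforces $\bX^* \in \mathcal{S}$ by appending a basis of $\mathcal{S}^\perp$ as extra rows with zero right-hand side, yielding $\bX^* = (\tilde{\Delta}\boldsymbol{y})^{-1}\ln(\tilde{\Delta}\bK)$; this is carried out twice (Step 1 for one connected component, Step 2 for several) and with a case split $s=n$ versus $s<n$. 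You instead enforce membership in $\mathcal{S}$ through the ansatz $\bX^* = \sum_a t_a r_a$ over a basis of reaction vectors coming from edges, which converts the selected equations into the Gram system $\Gamma t = c(\bk)$ with $\Gamma$ a fixed positive-definite matrix; this is the dual mechanism to the paper's row-augmentation, and it handles all values of $s$ and any number of connected components uniformly, with no case analysis. Your identification step --- that the Gram solution agrees with the point from the well-definedness argument because the point $\hat{Q}(\bk) \in \mathcal{S}$ satisfies the selected equations and the Gram system has a unique solution in $\mathcal{S}$ --- is exactly the right way to close the loop, and it is what guarantees the discarded pair relations hold automatically. The trade-off: the paper's construction displays $\hat{Q}(\bk)$ explicitly as a fixed inverse matrix applied to a vector of logarithms of the $K_i$, while your version is more economical and structurally cleaner, at the cost of invoking the Matrix--Tree positivity of the $K_i$ explicitly (which the paper asserts implicitly via $\bK \in \mathbb{R}_{>0}^m$).
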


\begin{proof}
Note that, for the completeness of this paper, we explain here the proof of Lemma \ref{Q hat smooth}, which was given in the proof of \cite[Theorem 3.5]{Connected}. 

 Suppose $\bx^* \in \RR^n_{>0}$ is a complex-balanced steady state of $(G, \bk)$. Then by Theorem \ref{thm:HJ} (b), every complex-balanced steady state $\bx \in \RR^n_{>0}$ of the complex-balanced system $(G, \bk)$ satisfies the following
\[
\ln (\bx)  \in \ln \bx^* + \mathcal{S}^\perp.
\]
Hence, in particular, for any $\bk \in \mathcal{V}(G)$, there exists a unique complex-balanced steady state $\bx$, such that
\[
\ln (\bx) \in \mathcal{S}.
\] 
Therefore, the map $\hat{Q}$ is well-defined.

\medskip

Next, we show the smoothness of the map $\hat{Q}$. 
From Definition \ref{def: Ak}, it is clear that the vector ${\bK} = (K_i)  \in \mathbb{R}_{>0}^m$ depends smoothly on the reaction rate vector ${\bk} = (k_{ij}) \in \mathbb{R}_{>0}^{E}$. Hence, it suffices to show that $\hat{Q}(\bk) = \bX^*$ depends  smoothly on ${\bK}$.

By Proposition \ref{prop:iff}, a state $\bx$ is a complex-balanced equilibrium if and only if for any two vertices $\byi, \byj$ in the same connected component of $G$ we have
\begin{equation} \label{eq:binom2}
K_i \bx^{\byj}
= K_j \bx^{\byi}.
\end{equation}
Taking the log of both sides in Equation \eqref{eq:binom2}, we derive
\begin{equation} \label{eq:logMultivar1}
\ln (K_i) + \byj^\intercal \cdot \ln (\bx)
=\ln (K_j) + \byi^\intercal \cdot \ln (\bx).
\end{equation} 
By setting $\bX = \ln (\bx)$, we can rewrite (\ref{eq:logMultivar1}) as
\begin{equation}\label{eq:logMultivar2}
\ln (K_i / K_j)
= (\byi^\intercal - \byj^\intercal) \cdot \bX,
\end{equation}
where $\byi$ and $\byj$ are two vertices belonging to the same connected component of $G$.

The proof consists of two steps, as it was shown in the proof of \cite[Theorem 3.5]{Connected} (see also \cite[Corollary 3.13]{Connected}).  

First, we prove the smoothness in the case where the graph $G$ has only one connected component.
Second, we generalize the result for any number of connected components.

\paragraph{Step 1.}
Suppose that the graph $G$ has a single connected component. In this case, the vertices $\{\by_1, \ldots, {\boldsymbol{y}_m}\}$ belong to the same connected component of $G$. Note that Equations (\ref{eq:logMultivar2}) are equivalent to the following system of linear equations in the unknowns $\bX$:
\begin{equation} \label{system 1}
\begin{bmatrix}
\ln ( K_1 / K_2) \\
\ln ( K_2 / K_3) \\
\vdots \\
\ln ( K_{m-1} / K_m)
\end{bmatrix}
=
\underbrace{
\begin{bmatrix}
 \by_1^\intercal - \by_2^\intercal \\
\by_2^\intercal - \by_3^\intercal  \\
\vdots \\
\by_{m-1}^\intercal - \by_{m}^\intercal
\end{bmatrix}
}_{\Delta \boldsymbol{y}}
\begin{bmatrix}
 X_1\\
 X_2\\
\vdots \\
X_n
\end{bmatrix}.
\end{equation}
The graph $G$ is strongly connected, hence its stoichiometric subspace (recall Definition \ref{def:stoichiom}) is given by
\begin{equation} \notag
\mathcal{S} = \spn \{ \by_1^\intercal -\by_2^\intercal, \by_2^\intercal -\by_3^\intercal, \ldots, \by_{m-1}^\intercal -\by_{m}^\intercal \}.
\end{equation}
If we denote the dimension of $\mathcal{S}$ by $s$, then we obtain $s \leq \min \{ m-1, n\}$. In addition,  the matrix $\Delta \boldsymbol{y}$ has exactly $s$ linearly independent rows. 
We may assume without loss of generality that the first $s$ rows of $\Delta \boldsymbol{y}$ are linearly independent. Hence, we have:
\begin{equation} \label{S span}
\mathcal{S} = \spn \{ \by_1^\intercal -\by_2^\intercal, \by_2^\intercal -\by_3^\intercal, \ldots, \by_{s}^\intercal -\by_{s+1}^\intercal \}.
\end{equation}

Let us use the following notations:
\begin{equation} \notag
\Delta_s \boldsymbol{y} := 
\begin{bmatrix}
 \by_1^\intercal - \by_2^\intercal \\
\by_2^\intercal - \by_3^\intercal  \\
\vdots \\
\by_s^\intercal - \by_{s+1}^\intercal
\end{bmatrix}, \ \text{and }
\Delta_s {\bK}:=\begin{bmatrix}
K_1 / K_2 \\
K_2 / K_3 \\
\vdots \\
K_{s} / K_{s+1}
\end{bmatrix}.
\end{equation}
Consider now the system of equations (\ref{system 2}):
\begin{equation} \label{system 2}
\ln (\Delta_s {\bK})=(\Delta_s \boldsymbol{y}) \bX.
\end{equation}

By Theorem \ref{thm:HJ}, for any $\bk \in \mathcal{V}(G)$, there exists a complex-balanced steady state $\bx^* \in \RR^n_{>0}$ of the complex-balanced system $(G, \bk)$. In addition, we can write the solutions to \eqref{system 1} as follows:  
$\bX = \ln \bx^* + \mathcal{S}^\perp$ and we obtain that the dimension of the set of solutions to (\ref{system 1}) is $n-s$. 
Moreover, one can check that the solutions of the system (\ref{system 1}) are also solutions of the system (\ref{system 2}) and the set of solutions to (\ref{system 2}) is also of dimension $n-s$. In conclusion, when solving for $\bX$, the systems (\ref{system 1}) and (\ref{system 2}) are equivalent.

 In the sequel, let us construct $\bX^*$, the special solution to the system \eqref{system 2}, where $\bX^* \in \mathcal{S}$. 
Remember that the dimension  of the stoichiometric subspace $s \leq \min \{ m-1, n\}$. We consider the following two cases:

\medskip

\textbf{Case 1: } $s = n$. In this case, we have that the stoichiometric subspace $\mathcal{S} = \mathbb{R}^n$ and the matrix $\Delta_s \by$ is invertible. 
We compute a solution of (\ref{system 2}) as follows:
\begin{equation} \label{X* 1}
\bX^* = (\Delta_s \boldsymbol{y})^{-1} \ln (\Delta_s {\bK}). 
\end{equation}
Then $\bX^* \in \mathcal{S} = \mathbb{R}^n$, and $\exp  (\bX^*)$ is a complex-balanced equilibrium.

\medskip

\textbf{Case 2: } $s < n$.
Remember that we denote the orthogonal complement of $\mathcal{S}$ by $\mathcal{S}^{\perp}$.  
Then we have: 
\begin{equation} \label{S perp}
0 < \dim (\mathcal{S}^{\perp}) = n - \dim (\mathcal{S}) = n - s,
\end{equation}
since the stoichiometric subspace $\mathcal{S} \subset \mathbb{R}^n$.
Let us consider $B = \{ \bv_1, \bv_2, \ldots, \bv_{n-s} \} \subset \mathbb{R}^n$, a basis of $\mathcal{S}^{\perp}$.
Next, let us construct a new matrix and vector as follows:
\begin{equation} \label{tilde matirx}
\tilde{\Delta} \boldsymbol{y} := 
\begin{bmatrix}
\Delta_s \boldsymbol{y} \\
\bv_1^\intercal \\
\vdots \\
\bv_{n-s}^\intercal
\end{bmatrix}, \ \text{and }
\tilde{\Delta} {\bK} := \begin{bmatrix}
\Delta_s {\bK} \\
0 \\
\vdots \\
0
\end{bmatrix}.
\end{equation}
We now focus on system (\ref{system 3}):
\begin{equation} \label{system 3}
\ln (\tilde{\Delta} {\bK}) = (\tilde{\Delta} \boldsymbol{y}) \bX.
\end{equation} 
From \eqref{S span} and the fact that $\{ \bv_1, \ldots, \bv_{n-s} \}$ forms a basis of $\mathcal{S}^{\perp}$, we obtain that $\tilde{\Delta} \boldsymbol{y} \in \mathbb{R}_{n \times n}$ is an invertible matrix.
Therefore, we get a solution of system (\ref{system 3}) as
\begin{equation} \label{X* 2}
	\bX^* = (\tilde{\Delta} \boldsymbol{y})^{-1} \ln (\tilde{\Delta} {\bK}).
\end{equation}
We have that $\bX^*$ solves (\ref{system 2}) and for $i = 1, \cdots, n-s$, 
\begin{equation} \notag
\bv_i^\intercal \cdot \bX^*  = 0.
\end{equation}
Therefore $\bX^* \in \mathcal{S}$. Moreover, by construction, $\exp  (\bX^*)$ is a complex-balanced equilibrium. 

To sum up, in both cases we computed the vector $\bX^* \in \mathcal{S}$, such that $\exp  (\bX^*)$ is a complex-balanced equilibrium of the system $(G, \bk)$.
Since the map $\hat{Q}$ is well-defined, we have that $\hat{Q}(\bk) = \bX^*$.
In addition, since $(\Delta_s \boldsymbol{y})^{-1}$ and $(\tilde{\Delta} \boldsymbol{y})^{-1}$ are fixed real matrices, we conclude that $\bX^*$ depends smoothly on the vector ${\bK}$.

\paragraph{Step 2.}
Let us suppose that the graph $G$ has multiple connected components, denoted by $V_1, \ldots, V_\ell$ with $\ell > 1$.
By relabeling the vertices according to the connected components of $G$ we have:
\[
V_p = \{ \by_{m_{p-1}+1}, \ldots, \by_{m_p}\}, \text{ for } 1 \leq p \leq \ell.
\] 
The system \eqref{eq:logMultivar2} is equivalent to the following system of equations in the unknown $\bX$:
\begin{equation} \label{system 1 l>1}
\underbrace{
\begin{bmatrix}
\ln ( K_1 / K_2) \\
\vdots \\
\ln ( K_{m_1 - 1} / K_{m_1}) 
\\
\hdashline[1.5pt/1.5pt]
\ln ( K_{m_1 + 1} / K_{m_1 + 2}) \\
\vdots \\
\ln ( K_{m_2 - 1} / K_{m_2}) 
\\
\hdashline[1.5pt/1.5pt]
\vdots \\
\ln ( K_{m_{\ell}-1} / K_{m_{\ell}})
\end{bmatrix} 
}_{\ln (\Delta {\bK})}
= 
\underbrace{
\begin{bmatrix}
 \by_1^\intercal - \by_2^\intercal \\
\vdots \\
\by_{m_1 - 1}^\intercal - \by_{m_1}^\intercal 
\\
\hdashline[1.5pt/1.5pt]
\by_{m_1 + 1}^\intercal - \by_{m_1 + 2}^\intercal 
\\
\vdots \\
\by_{m_2 - 1}^\intercal - \by_{m_2}^\intercal  
\\
\hdashline[1.5pt/1.5pt]
\vdots \\
\by_{m_{\ell}-1}^\intercal - \by_{m_{\ell}}^\intercal
\end{bmatrix}
}_{\Delta \boldsymbol{y}}
\begin{bmatrix}
X_1 \\
X_2 \\
\vdots \\
X_n
\end{bmatrix}.
\end{equation}

By Definition \ref{def:stoichiom}, the stoichiometric subspace of $G$ is 
\begin{equation} \notag
\mathcal{S} = \spn \{ \by_1^\intercal -\by_2^\intercal, \ldots, \by_{m_1 - 1}^\intercal - \by_{m_1}^\intercal, 
\by_{m_1 + 1}^\intercal - \by_{m_1 + 2}^\intercal, \ldots, \by_{m_{\ell}-1}^\intercal -\by_{m_{\ell}}^\intercal \}.
\end{equation}
As before, letting $s$ denote the dimension of  $\mathcal{S}$, we have $s \leq \min \{ m-\ell, n\}$  and the matrix $\Delta \boldsymbol{y}$ has exactly $s$ linearly independent rows.
Then we choose $s$ linearly independent rows in $\Delta \boldsymbol{y}$, and these rows belonging to  
$\Delta \boldsymbol{y}$ provide a matrix $\Delta_s \boldsymbol{y}$ which is full row rank. The vector $\ln (\Delta_s {\bK})$ is given by the corresponding rows in $\ln (\Delta {\bK})$. 
The system (\ref{system 1 l>1}) is equivalent to the following one in the unknowns $\bX$:
\begin{equation} \label{system 2 l>1}
\ln (\Delta_s {\bK})=(\Delta_s \boldsymbol{y}) \bX.
\end{equation}

Subsequently, let us construct $\bX^* \in \mathcal{S}$, the special solution where $\exp  (\bX^*)$ is a complex-balanced equilibrium.
As we did in the first step we consider two cases, in function of $s$.

First, if $s = n$, then the matrix $\Delta_s \boldsymbol{y}$ is invertible.
Hence, we compute a solution of system (\ref{system 2 l>1}) as follows: 
\begin{equation} \notag
\bX^* = (\Delta_s \boldsymbol{y})^{-1} \ln (\Delta_s {\bK})  \in \mathcal{S} = \mathbb{R}^n,
\end{equation}
and $\exp  (\bX^*)$ is a complex-balanced equilibrium.

Second, if $s < n$, let $B = \{ \bv_1, \ldots, \bv_{n-s} \}$ be a basis of $\mathcal{S}^{\perp}$. 
Analogously to Equations \eqref{tilde matirx}-\eqref{X* 2}, we start by adding $\bv_1^\intercal, \ldots, \bv_{n-s}^\intercal$ on the bottom of the matrix $\Delta_s \boldsymbol{y}$, and add $n-s$ zeros to the vector $\ln (\Delta_s {\bK})$.
Next, we get the desired solution $\bX^* \in \mathcal{S}$ of \eqref{system 2 l>1}, and $\exp  (\bX^*)$ is a complex-balanced equilibrium.

As in the case of Step 1 (the single connected component case), we conclude that $\bX^* = \hat{Q}(\bk)$ depends smoothly on the vector ${\bK}$.
\end{proof}

\section{Complex-balanced equilibria depend smoothly on the initial conditions}
\label{sec:smooth_dependent_initial}


Here our goal is to prove Theorem \ref{thm:smoothDependanceinitial}, i.e., to show that the complex-balanced equilibria vary smoothly with the initial condition vector $\bx_0$. 

This section is divided into the two logical steps of the proof. First, we introduce the map $\phi$ in Definition \ref{def:mapStep2} and we show that $\phi$ is a smooth map in Lemma \ref{phi smmoth}. Second, we give another map $\Phi$ in Definition \ref{def:mapStep3}. Lemma \ref{Phi smmoth} is dedicated to proving that $\Phi$ is also smooth.

\begin{definition} \label{def:mapStep2}
Let $G=(V, E)$ be a weakly reversible Euclidean embedded graph and let $\mathcal{S}$ be its associated stoichiometric subspace.
Given a fixed state $\bx_0 \in \mathbb{R}_{>0}^n$, define the following map: 
\begin{equation}
\phi: \mathbb{R}^n \rightarrow \bx_0 + \mathcal{S},
\end{equation}
such that
\begin{equation}
\phi (\bX^*) := \bx^*,
\end{equation} 
where $\bx^* = \exp (\bX^* + \mathcal{S}^\perp) \cap ({\boldsymbol{x}_0} + \mathcal{S})$.
\end{definition}

\begin{lemma} \label{phi smmoth}
The map $\phi$ from Definition \ref{def:mapStep2} is well-defined and smooth.
\end{lemma}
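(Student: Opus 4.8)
The plan is to read off from Definition~\ref{def:mapStep2} exactly what $\phi$ does: unwinding the formula, $\phi(\bX^*)$ is the unique point $\bx^* \in (\bx_0 + \mathcal{S}) \cap \RR^n_{>0}$ whose logarithm satisfies $\log(\bx^*) - \bX^* \in \mathcal{S}^\perp$. Thus two things must be shown: that such a point exists and is unique (well-definedness), which is the content of \emph{Birch's theorem}, and that it depends smoothly on $\bX^*$. I would handle well-definedness and smoothness separately, using a convex-optimization argument for the former and the implicit function theorem for the latter.

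For well-definedness I would introduce the strictly convex function $g(\bx) := \sum_{i=1}^n (x_i \log x_i - x_i) - \langle \bX^*, \bx\rangle$ on $\RR^n_{>0}$, whose gradient is $\nabla g(\bx) = \log(\bx) - \bX^*$ and whose Hessian $\mathrm{diag}(1/x_1, \ldots, 1/x_n)$ is positive definite. I would minimize $g$ over the closed, nonempty polyhedron $\overline{P} := (\bx_0 + \mathcal{S}) \cap \RR^n_{\geq 0}$; coercivity holds because $\sum_i x_i \log x_i$ dominates the linear term along every recession direction of $\overline{P}$, so a minimizer exists. A boundary analysis along the segment from a putative boundary minimizer toward the interior point $\bx_0$ shows the directional derivative tends to $-\infty$ (since $\frac{d}{dt}(t\log t) = \log t + 1 \to -\infty$), forcing the minimizer into $\RR^n_{>0}$; the interior first-order condition then reads $\log(\bx^*) - \bX^* \perp \mathcal{S}$, i.e. $\log(\bx^*) - \bX^* \in \mathcal{S}^\perp$, and strict convexity gives uniqueness. (Uniqueness alternatively follows from the monotonicity lemma: if $\bx_1, \bx_2 \in \RR^n_{>0}$ with $\bx_1 - \bx_2 \in \mathcal{S}$ and $\log\bx_1 - \log\bx_2 \in \mathcal{S}^\perp$, then $0 = \langle \bx_1 - \bx_2, \log\bx_1 - \log\bx_2\rangle = \sum_i (x_{1,i}-x_{2,i})(\log x_{1,i} - \log x_{2,i}) \geq 0$ forces $\bx_1 = \bx_2$; existence can instead be extracted from Theorem~\ref{thm:HJ} after realizing $\exp(\bX^*)$ as a complex-balanced equilibrium of some $(G,\bk)$ via the flux map $\varphi$ and $\mathcal{B}(G) \neq \emptyset$.)

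For smoothness I would fix a basis $\bu_1, \ldots, \bu_s$ of $\mathcal{S}$ and coordinatize the polyhedron by $\bx(t) = \bx_0 + \sum_{k=1}^s t_k \bu_k$ on the open set where $\bx(t) \in \RR^n_{>0}$. Defining $F_j(t, \bX^*) := \langle \log(\bx(t)) - \bX^*, \bu_j\rangle$ for $j = 1, \ldots, s$ on that open set, the defining condition becomes $F(t, \bX^*) = 0$, and $F$ is smooth. The partial Jacobian in $t$ is $\big[\partial F_j / \partial t_k\big] = U^\intercal \, \mathrm{diag}(1/x_i) \, U$ with $U = [\bu_1 \mid \cdots \mid \bu_s]$, which is positive definite (full column rank of $U$ combined with a positive definite diagonal), hence invertible at the solution. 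The implicit function theorem then yields a smooth local solution $t = t(\bX^*)$; since the global solution is unique, it coincides with this local branch everywhere, so $\bx^* = \bx_0 + \sum_k t_k(\bX^*)\bu_k$ is a smooth function of $\bX^*$ into the affine subspace $\bx_0 + \mathcal{S}$, proving $\phi$ is smooth.

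The main obstacle is the well-definedness, and within it, two points in particular: ruling out boundary minimizers so that $\bx^*$ genuinely lies in $\RR^n_{>0}$ rather than on $\partial \RR^n_{\geq 0}$, and establishing \emph{global} uniqueness so that the local smooth branch produced by the implicit function theorem is the globally correct map. Once Birch's theorem is secured, smoothness is routine, its only real content being the invertibility of the Gram-type matrix $U^\intercal \mathrm{diag}(1/x_i) U$.
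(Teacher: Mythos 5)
Your proof is correct, and its overall skeleton matches the paper's (well-definedness from Birch's theorem, smoothness from the implicit function theorem applied to a system whose partial Jacobian is a positive-definite Gram matrix), but it differs in two substantive ways. First, the paper treats Birch's theorem as a black box, citing \cite{cdss2009} and Pachter--Sturmfels, whereas you reprove it via minimization of the strictly convex functional $\sum_i (x_i \log x_i - x_i) - \langle \bX^*, \bx\rangle$ over $(\bx_0+\mathcal{S})\cap\RR^n_{\geq 0}$, with the boundary-repulsion argument forcing the minimizer into the open orthant; this makes your write-up self-contained at the cost of length. Second, your implicit-function setup is dual to the paper's: the paper takes as unknowns the $n-s$ coefficients $w_i$ of a basis $\{\bv_1,\ldots,\bv_{n-s}\}$ of $\mathcal{S}^\perp$ appearing in the exponent, imposes the equations $\exp(\bX^*+\sum_j w_j\bv_j)\cdot \bv_i = \bx_0\cdot\bv_i$, and shows the Jacobian $V\,\mathrm{diag}(\tilde x)\,V^\intercal$ (with $\tilde x \in \RR^n_{>0}$) is nonsingular, while you take as unknowns the $s$ coordinates of the point $\bx(t)=\bx_0+\sum_k t_k \bu_k$ in the polyhedron, impose $\langle \log \bx(t) - \bX^*, \bu_j\rangle = 0$, and invert $U^\intercal\,\mathrm{diag}(1/x_i)\,U$. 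The two parametrizations are mirror images (exponential side versus polyhedron side) and carry the same mathematical content; yours has the minor advantages of handling $s=n$ without a separate case (the paper splits into $s=n$ and $s<n$) and of pairing naturally with your convex-optimization proof of uniqueness, while the paper's has the advantage of producing $\phi(\bX^*)$ directly in the form $\exp(\bX^*+\sum_i w_i\bv_i)$, matching the description of $\bx^*$ in Definition \ref{def:mapStep2}. Your gluing step --- local IFT branches agree with the globally unique solution, hence the global solution is smooth --- is also exactly the logic the paper uses implicitly.
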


\begin{proof}

Using the well-known Birch Theorem (see, for instance \cite[Proposition 10]{cdss2009}, \cite[Theorem 1.10]{pachterStumrfels}), for any $\bX^* \in \mathbb{R}^n$ and $\bx_0 \in \mathbb{R}_{>0}^n$, then
$(\bx_0 + \mathcal{S})$ and $\exp (\bX^* + \mathcal{S}^{\perp})$ have a unique intersection point. Therefore, the map $\phi$ is well-defined. 

\medskip

Next, we show the smoothness of the map $\phi$. Let $s$ be the dimension of  $\mathcal{S}$, we have 
\[
\dim (\mathcal{S}) = s \leq n, 
\ \text{ and } \
\dim (\mathcal{S}^{\perp})  = n - \dim (\mathcal{S}) = n - s.
\]
Then we consider $s$ in two cases: $s = n$ and $s < n$.

\medskip

\textbf{Case 1: } $s = n$.
This shows $\mathcal{S} = \mathbb{R}^n$ and $\mathcal{S}^{\perp} = \emptyset$. 
Thus we derive $\bx_0 + \mathcal{S} = \mathbb{R}^n$ and
\begin{equation}
\phi (\bX^*) = \exp (\bX^*+\mathcal{S}^\perp ) \cap (\bx_0+\mathcal{S}) = \exp(\bX^*).
\end{equation}
It is clear that the map $\phi$ is smooth in this case.

\medskip

\textbf{Case 2: } $s < n$.
Then $\mathcal{S}^{\perp} \neq \emptyset$ and $\dim (\mathcal{S}^{\perp})  = n - s > 0$.
Then we pick a basis of $\mathcal{S}^{\perp}$, denoted by $B = \{ \bv_1, \bv_2, \ldots, \bv_{n-s} \} \subset \mathbb{R}^n$.
Given $\bx_0 \in \mathbb{R}_{>0}^n$, the Birch Theorem shows that for any $\bX^* \in \mathbb{R}^n$, there exists a unique real vector $\bw = ( w_1, \ldots, w_{n-s} )$, such that
\begin{equation} \label{eq:w}
\phi (\bX^*) = \exp (\bX^* +
\sum^{n-s}_{i=1} w_i \bv_i ) \in \bx_0 + \mathcal{S}.
\end{equation}
Thus each $w_i$ can be considered as a real function of $\bX^*$, i.e., $w_i = w_i (\bX^*) \in \mathbb{R}$ for $1 \leq i \leq n-s$. Since $\{ \bv_1, \bv_2, \ldots, \bv_{n-s} \}$ forms a basis of $\mathcal{S}^{\perp}$, \eqref{eq:w} is equivalent to the following:
for any $\bX^* \in \mathbb{R}^n$, the real vector function $( w_1 (\bX^*), \ldots, w_{n-s} (\bX^*))$ satisfies

\begin{equation} \label{eq:w 2}
\exp (\bX^* + \sum^{n-s}_{i=1} w_i (\bX^*) \bv_i ) \cdot \bv_i = \bx_0 \cdot \bv_i,
\ \text{for} \
1 \leq i \leq n-s.
\end{equation}

Now we construct $n-s$ functions $\{ f_1, \ldots, f_{n-s} \}$ as follows:
given $\bx_0 \in \mathbb{R}_{>0}^n$ and suppose $\{ \bv_1, \bv_2, \ldots, \bv_{n-s} \} \subset \mathbb{R}^n$ is a basis of $\mathcal{S}^{\perp}$, then for $1 \leq i \leq n-s$
\begin{equation}
f_i: \mathbb{R}^n \times \mathbb{R}^{n-s} \rightarrow \mathbb{R},
\end{equation}
such that for $(\bX, \by) \in \mathbb{R}^n \times \mathbb{R}^{n-s}$,
\begin{equation}
f_i (\bX, \by) :=
\exp (\bX +
\sum^{n-s}_{i=1} y_i \bv_i ) \cdot \bv_i
- \bx_0 \cdot \bv_i.
\end{equation}
It is clear that the solution to \eqref{eq:w 2} is equivalent to say that for any $\bX^* \in \RR^n$,
\[
f_1 = \cdots = f_{n-s} = 0,
\ \text{at} \ 
(\bX, \by) = (\bX^*, w_1 (\bX^*), \cdots, w_{n-s} (\bX^*) ).
\]

Here we claim that for any $\bX^* \in \RR^n$, we have
\begin{equation} 
\label{det non-zero}
\det \Big( \frac{\p f_i}{\p y_j} \Big) \neq 0,
\ \text{at} \ 
(\bX, \by) = (\bX^*, w_1 (\bX^*), \cdots, w_{n-s} (\bX^*) ).
\end{equation}
By direct computation, we get for $1 \leq i, j \leq n-s$,
\begin{equation} \notag
\bJ_{\mathbf{f}} := 
\Big( \frac{\p f_i}{\p y_j} \Big)_{i, j} = 
\exp (\bX +
\sum^{n-s}_{i=1} y_i \bv_i ) \cdot ( \bv_i \bv_j ),
\end{equation}
where $\bv_i \bv_j := (v_{i, 1} v_{j, 1}, v_{i, 2} v_{j, 2}, \cdots, v_{i, n} v_{j, n})$.
We can rewrite the matrix $\bJ_{\mathbf{f}}$ as
\begin{equation} \label{Jf}
\bJ_{\mathbf{f}} = 
\begin{bmatrix}
\bv_1 \\
\vdots \\
\bv_{n-s}
\end{bmatrix}
\begin{bmatrix}
    \tilde{x}_{1} & & \\
    & \ddots & \\
    & & \tilde{x}_{n}
\end{bmatrix}
\begin{bmatrix}
\bv^{\intercal}_1, \cdots, \bv^{\intercal}_{n-s} \\
\end{bmatrix},
\end{equation} 
where $(\tilde{x}_{1}, \cdots, \tilde{x}_{n}) := \exp (\bX +
\sum\limits^{n-s}_{i=1} y_i \bv_i )$.
Supposing $\det (\bJ_{\mathbf{f}}) = 0$ at a point $(\hat{\bX}, \hat{\by})$, then there exists a non-zero column vector $\blam \in \mathbb{R}^{n-s}$, such that 
\begin{equation} \notag
\mathbf{J}_{\mathbf{f}} (\hat{\bX}, \hat{\by}) \cdot \blam = \mathbf{0}.
\end{equation}
This implies that
\begin{equation} \label{lambda J}
\blam^{\intercal} \cdot \mathbf{J}_{\mathbf{f}}  (\hat{\bX}, \hat{\by}) \cdot \blam = 0.
\end{equation}
From \eqref{Jf}, \eqref{lambda J} and $\exp (\bX +
\sum\limits^{n-s}_{i=1} y_i \bv_i ) \in \RR^n_{>0}$, we obtain
\begin{equation} \notag
\begin{bmatrix}
\bv^{\intercal}_1, \cdots, \bv^{\intercal}_{n-s} \\
\end{bmatrix} 
\cdot \blam = \mathbf{0}.
\end{equation}
Since $\{ \bv_1, \cdots, \bv_{n-s} \}$ forms a basis for $\mathcal{S}^{\perp}$ and they are linearly independent, we get  $\blam = \mathbf{0}$ and this contradicts  the assumption. Thus, we proved the claim.

Note that $\{ f_1, \ldots, f_{n-s} \}$ are all smooth functions. Using \eqref{det non-zero} and the Implicit Function Theorem, we obtain that there exists a unique smooth function 
\[
g: \mathbb{R}^n \rightarrow \mathbb{R}^{n-s}
\]
such that for any $\bX \in \mathbb{R}^n$,
\begin{equation}
f_i (\bX, g(\bX)) = 0, 
\ \text{for} \ i = 1, \cdots, n-s.
\end{equation}
Using the Birch Theorem, we deduce 
that given a state $\bx_0 \in \mathbb{R}^n_{>0}$, for any $\bX^* \in \RR^n$
\begin{equation}
g(\bX^*) = (w_1 (\bX^*), \ldots, w_{n-s} (\bX^*)).
\end{equation}
This shows that $w_1 (\bX^*), \ldots, w_{n-s} (\bX^*)$ are smooth functions with respect to $\bX^*$.
Therefore, we conclude that the map
$\phi (\bX^*) = \exp (\bX^* +
\sum^{n-s}_{i=1} w_i \bv_i )$ is smooth.
\end{proof}

Now we are ready to prove Theorem \ref{thm:smoothDependanceEquil}.

\begin{proof}[Proof of Theorem \ref{thm:smoothDependanceEquil}]

Given an initial state ${\boldsymbol{x}_0} \in \mathbb{R}_{>0}^n$,  we define the following map, introduced in \cite[Definition 3.1]{Connected}:
\begin{equation} \label{equ:q}
Q_{{\boldsymbol{x}_0}}: \mathcal{V}(G) \rightarrow ({\boldsymbol{x}_0}+\mathcal{S}) \cap \mathbb{R}_{>0}^n,
\end{equation}
such that for any $\bk \in \mathcal{V}(G)$, $Q_{{\boldsymbol{x}_0}}({\bk})$ is the unique complex-balanced equilibrium in the invariant polyhedron $\mathcal{S}_{\bx_0}$, under the mass-action system $(G, \bk)$.

The map $Q_{{\boldsymbol{x}_0}}$ is well-defined for any state ${\boldsymbol{x}_0} \in \mathbb{R}_{>0}^n$ and $\bk \in \mathcal{V}(G)$, since Theorem \ref{thm:HJ} shows every complex-balanced system admits a unique equilibrium within each invariant polyhedron.  
Using Lemma \ref{Q hat smooth}, we obtain that for any $\bk \in \mathcal{V}(G)$, we have
\[
\hat{Q} (\bk) = \bX^* \in \mathcal{S}, 
\]
and $\exp (\bX^*)$ is a complex-balanced equilibrium of the mass-action system $(G, \bk)$.
On the other hand, from Lemma \ref{phi smmoth}, for any $\bX^* \in \RR^n$ we have
\begin{equation} \notag
\phi (\bX^*) = \exp (\bX^* + \mathcal{S}^\perp) \cap ({\boldsymbol{x}_0} + \mathcal{S}).
\end{equation} 

From Theorem \ref{thm:HJ}, every complex-balanced equilibrium can be written as
\begin{equation}
\exp (\bX^* + \mathcal{S}^\perp ).
\end{equation}
Furthermore, 
$\exp(\bX^* + \mathcal{S}^\perp )\cap (\bx_0+\mathcal{S})$ is the unique positive complex-balanced equilibrium in the invariant polyhedron $\mathcal{S}_{\bx_0}$. 
Thus, given $\bx_0 \in \mathbb{R}_{>0}^{n}$ and for any $\bk \in \mathcal{V}(G)$, we deduce
\begin{equation}
Q_{\bx_0} (\bk) = \phi \circ \hat{Q} (\bk).
\end{equation}
By Lemma \ref{Q hat smooth} and Lemma \ref{phi smmoth}, we get that both $\hat{Q}$ and $\phi$ are smooth functions. Therefore, we conclude the smoothness of the map $Q_{x_0}$ and prove the theorem.
\end{proof}

To prove Theorem \ref{thm:smoothDependanceinitial}, we introduce another map in Definition \ref{def:mapStep3} and show it is smooth.

\begin{definition} \label{def:mapStep3}
Let $(G, \bk)$ be a weakly reversible mass-action system with $\bk \in \mathcal{V}(G)$, and let $\mathcal{S}$ be its associated stoichiometric subspace.
Given fixed $\bX^* \in \mathbb{R}^n$, define the following map: 
\begin{equation}
\Phi: \mathbb{R}_{>0}^n \rightarrow \mathbb{R}_{>0}^n,
\end{equation}
such that
\begin{equation}
\Phi (\bx_0) := \bx^*,
\end{equation} 
where $\bx^* = \exp (\bX^* + \mathcal{S}^\perp) \cap ({\boldsymbol{x}_0} + \mathcal{S})$.
\end{definition}

\begin{lemma} \label{Phi smmoth}
The map $\Phi$ from Definition \ref{def:mapStep3} is well-defined and smooth.
\end{lemma}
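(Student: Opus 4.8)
The statement to prove is Lemma \ref{Phi smmoth}: the map $\Phi$ from Definition \ref{def:mapStep3}, which sends an initial condition $\bx_0$ to the unique complex-balanced equilibrium $\bx^* = \exp(\bX^* + \mathcal{S}^\perp) \cap (\bx_0 + \mathcal{S})$ within the invariant polyhedron $\mathcal{S}_{\bx_0}$, is well-defined and smooth. The plan is to mirror almost exactly the argument used in Lemma \ref{phi smmoth}, since $\Phi$ and $\phi$ compute the very same intersection point; the only difference is which argument is held fixed. In $\phi$, the point $\bX^*$ varies and $\bx_0$ is fixed; in $\Phi$, the point $\bX^*$ is fixed and $\bx_0$ varies. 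Well-definedness is immediate from the Birch Theorem (cited in the proof of Lemma \ref{phi smmoth}): for any fixed $\bX^* \in \RR^n$ and any $\bx_0 \in \RR^n_{>0}$, the sets $\exp(\bX^* + \mathcal{S}^\perp)$ and $\bx_0 + \mathcal{S}$ meet in exactly one point, so $\Phi$ is a genuine map.

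\textbf{Key steps.} First I would split into the two cases $s = \dim(\mathcal{S}) = n$ and $s < n$, as before. When $s = n$ we have $\mathcal{S} = \RR^n$ and $\mathcal{S}^\perp = \{\mathbf{0}\}$, so $\Phi(\bx_0) = \exp(\bX^*)$ is a constant map in $\bx_0$ and hence trivially smooth. When $s < n$, fix a basis $B = \{\bv_1, \ldots, \bv_{n-s}\}$ of $\mathcal{S}^\perp$. By Birch, for each $\bx_0$ there is a unique $\bw = (w_1, \ldots, w_{n-s})$ with
\[
\Phi(\bx_0) = \exp\Big(\bX^* + \sum_{i=1}^{n-s} w_i \bv_i\Big) \in \bx_0 + \mathcal{S},
\]
and the membership condition is equivalent to the $n-s$ scalar equations $\exp(\bX^* + \sum_i w_i \bv_i)\cdot \bv_j = \bx_0 \cdot \bv_j$ for $1 \le j \le n-s$. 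I would then set up the smooth function $F_j(\bx_0, \by) := \exp(\bX^* + \sum_i y_i \bv_i)\cdot \bv_j - \bx_0 \cdot \bv_j$ on $\RR^n_{>0}\times \RR^{n-s}$ and apply the Implicit Function Theorem to solve for $\by$ as a smooth function of $\bx_0$.

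\textbf{Main obstacle (which is not really an obstacle).} The one point requiring verification is that the Jacobian $\partial F_j / \partial y_k$ is invertible at every solution, so that the Implicit Function Theorem applies. This is precisely the matrix $\bJ_{\mathbf{f}}$ computed in the proof of Lemma \ref{phi smmoth}: differentiating with respect to the $y$-variables gives exactly the same expression
\[
\Big(\frac{\p F_j}{\p y_k}\Big)_{j,k} = \begin{bmatrix} \bv_1 \\ \vdots \\ \bv_{n-s}\end{bmatrix}
\begin{bmatrix} \tilde{x}_1 & & \\ & \ddots & \\ & & \tilde{x}_n \end{bmatrix}
\begin{bmatrix} \bv^{\intercal}_1, \cdots, \bv^{\intercal}_{n-s}\end{bmatrix},
\]
where $\tilde{x} = \exp(\bX^* + \sum_i y_i \bv_i) \in \RR^n_{>0}$, because the $y$-dependence of $F_j$ is identical to that of $f_j$ in Lemma \ref{phi smmoth} (the term $\bx_0\cdot\bv_j$ does not involve $\by$). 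The nondegeneracy argument is therefore verbatim: the inner diagonal matrix is positive definite and the $\bv_i$ are linearly independent, so $\lambda^\intercal \bJ_{\mathbf{f}} \lambda = 0$ forces $\sum \lambda_i \bv_i = \mathbf{0}$ and hence $\blam = \mathbf{0}$. With invertibility established, the Implicit Function Theorem yields a smooth $\bx_0 \mapsto \bw(\bx_0)$, and composing with the smooth map $\bw \mapsto \exp(\bX^* + \sum_i w_i \bv_i)$ shows $\Phi$ is smooth. Theorem \ref{thm:smoothDependanceinitial} then follows immediately, since $\bx^*(\bx_0) = \Phi(\bx_0)$ for the fixed $\bX^*$ determined by $\bk$.
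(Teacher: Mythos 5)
Your proposal is correct and follows essentially the same route as the paper: well-definedness via the Birch Theorem, the case split on $s = n$ versus $s < n$, the implicit-function-theorem setup with the functions $F_j$, and the observation that the Jacobian in the $\by$-variables coincides with the matrix $\bJ_{\mathbf{f}}$ already shown to be invertible in Lemma \ref{phi smmoth}. Your version is in fact slightly cleaner in two minor respects — you write $\mathcal{S}^\perp = \{\mathbf{0}\}$ rather than $\mathcal{S}^\perp = \emptyset$ in Case 1, and you avoid the index clash between the function label and the summation index in the definition of the $f_i$'s — but these are cosmetic, not a different argument.
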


\begin{proof}

Similar to the proof of Lemma \ref{phi smmoth}, the Birch Theorem shows that the map $\Phi$ is well-defined. 

\medskip

To show the smoothness of the map $\Phi$, we let $s$ be the dimension of  $\mathcal{S}$, and we consider $s$ in two cases: $s = n$ and $s < n$.

\medskip

\textbf{Case 1: } $s = n$.
This shows $\mathcal{S} = \mathbb{R}^n$ and $\mathcal{S}^{\perp} = \emptyset$. 
Thus we derive
\begin{equation}
\phi (\bX^*) = \exp (\bX^*+\mathcal{S}^\perp ) \cap (\bx_0+\mathcal{S}) = \exp(\bX^*).
\end{equation}
Since $\bX^*$ is a fixed vector, the map $\Phi$ is a constant function and it is smooth in this case.

\medskip

\textbf{Case 2: } $s < n$.
In this case, $\mathcal{S}^{\perp} \neq \emptyset$ and $\dim (\mathcal{S}^{\perp})  = n - s > 0$.
Then we pick a basis of $\mathcal{S}^{\perp}$, denoted by $B = \{ \bv_1, \bv_2, \ldots, \bv_{n-s} \} \subset \mathbb{R}^n$.
Given $\bX^* \in \mathbb{R}^n$, the Birch Theorem shows that for any $\bx_0 \in \mathbb{R}^n_{>0}$, there exists a unique real vector $\bz = ( z_1, \ldots, z_{n-s} )$, such that
\begin{equation} \label{eq:z}
\Phi (\bx_0) = \exp (\bX^* +
\sum^{n-s}_{i=1} z_i \bv_i ) \in \bx_0 + \mathcal{S}.
\end{equation}
Thus each $z_i$ is a real function of $\bx_0$, i.e., $z_i = z_i (\bx_0) \in \mathbb{R}$ for $1 \leq i \leq n-s$. 
Since $\{ \bv_1, \bv_2, \ldots, \bv_{n-s} \}$ forms a basis of $\mathcal{S}^{\perp}$, \eqref{eq:z} is equivalent to the following:
for any $\bx_0 \in \mathbb{R}^n_{>0}$, the real vector function $( z_1 (\bx_0), \ldots, z_{n-s} (\bx_0))$ satisfies
\begin{equation} \label{eq:z 2}
\exp (\bX^* + \sum^{n-s}_{i=1} z_i (\bx_0) \bv_i ) \cdot \bv_i = \bx_0 \cdot \bv_i,
\ \text{for} \
1 \leq i \leq n-s.
\end{equation}

Now we construct $n-s$ functions $\{ f_1, \ldots, f_{n-s} \}$ as follows:
given $\bX^* \in \mathbb{R}^n$ and suppose $\{ \bv_1, \bv_2, \ldots, \bv_{n-s} \} \subset \mathbb{R}^n$ is a basis of $\mathcal{S}^{\perp}$, then for $1 \leq i \leq n-s$
\begin{equation}
f_i: \mathbb{R}^n_{>0} \times \mathbb{R}^{n-s} \rightarrow \mathbb{R},
\end{equation}
such that for $(\bx, \by) \in \mathbb{R}^n \times \mathbb{R}^{n-s}$,
\begin{equation}
f_i (\bx, \by) :=
\exp (\bX^* +
\sum^{n-s}_{i=1} y_i \bv_i ) \cdot \bv_i
- \bx \cdot \bv_i.
\end{equation}
It is clear that the solution to \eqref{eq:z 2} is equivalent to say that for any $\bx_0 \in \mathbb{R}^n_{>0}$,
\[
f_1 = \cdots = f_{n-s} = 0,
\ \text{at} \ 
(\bx, \by) = (\bx_0, z_1 (\bx_0), \cdots, z_{n-s} (\bx_0) ).
\]

By the proof of Lemma \ref{phi smmoth}, we obtain that for any $\bx_0 \in \mathbb{R}^n_{>0}$, 
\begin{equation} 
\label{det non-zero_b}
\det \Big( \frac{\p f_i}{\p y_j} \Big) \neq 0,
\ \text{at} \ 
(\bx, \by) = (\bx_0, z_1 (\bx_0), \cdots, z_{n-s} (\bx_0) ).
\end{equation}
Since $\{ f_1, \ldots, f_{n-s} \}$ are all smooth functions, we apply \eqref{det non-zero_b} on the Implicit Function Theorem. Then we derive that there exists a unique smooth function 
\[
g: \mathbb{R}^n \rightarrow \mathbb{R}^{n-s}
\]
such that for any $\bx_0 \in \mathbb{R}^n_{>0}$,
\begin{equation}
f_i (\bX, g(\bX)) = 0, 
\ \text{for} \ i = 1, \cdots, n-s.
\end{equation}
The Birch Theorem implies that for any $\bx_0 \in \mathbb{R}^n_{>0}$,
\begin{equation}
g(\bx_0) = (z_1 (\bx_0), \ldots, z_{n-s} (\bx_0)).
\end{equation}
This shows that $z_1 (\bx_0), \ldots, z_{n-s} (\bx_0)$ are smooth functions of $\bx_0$.
Therefore, we conclude that 
given $\bX^* \in \mathbb{R}^n$, the map
$\Phi (\bx_0) = \exp (\bX^* +
\sum^{n-s}_{i=1} z_i (\bx_0) \bv_i )$ is smooth.
\end{proof}

Finally, we are ready to prove Theorem \ref{thm:smoothDependanceinitial}.

\begin{proof}[Proof of Theorem \ref{thm:smoothDependanceinitial}]

Given the vector of reaction rate constants $\bk \in \mathcal{V}(G)$, we define the following map:
\begin{equation} \label{equ:p}
P: \mathbb{R}_{>0}^n \rightarrow \mathbb{R}_{>0}^n,
\end{equation}
such that for any $\bx_0 \in \RR^n_{>0}$, we define $P (\bx_0)$ to be the unique complex-balanced equilibrium in the invariant polyhedron $\mathcal{S}_{\bx_0}$, under the mass-action system $(G, \bk)$.

By Theorem \ref{thm:HJ}, the map $P$ is well-defined and $P (\bx_0) = \exp(\bX^* + \mathcal{S}^\perp )\cap (\bx_0+\mathcal{S})$.
 By Lemma \ref{Q hat smooth}, we obtain that for any $\bk \in \mathcal{V}(G)$,
$\hat{Q} (\bk) = \bX^* \in \mathcal{S}$ 
and $\exp (\bX^*)$ is a complex-balanced equilibrium of the mass-action system $(G, \bk)$.
Then Lemma \ref{Phi smmoth} shows that, given $\bX^* \in \RR^n$ and for any $\bx_0 \in \RR^n_{>0}$, we have
\begin{equation} \notag
\Phi (\bx_0) = \exp (\bX^* + \mathcal{S}^\perp) \cap ({\boldsymbol{x}_0} + \mathcal{S}).
\end{equation}  
Therefore, given $\bk \in \mathcal{V}(G)$ and for any $\bx_0 \in \mathbb{R}_{>0}^{n}$, we have
\begin{equation}
P (\bx_0) = \Phi \circ \hat{Q} (\bx_0).
\end{equation}
By Lemma \ref{Q hat smooth} and Lemma \ref{Phi smmoth}, both $\hat{Q}$ and $\Phi$ are smooth functions. Hence we conclude the smoothness of the map $P$ and prove the theorem.
\end{proof}


\section*{Ackowledgements}
G. Craciun and M.-Ş. Sorea are thankful for the support of the Nonlinear Algebra research group of Bernd Sturmfels at the Max
Planck Institute for Mathematics in the Sciences in Leipzig, Germany. M.-Ş. Sorea
is grateful to Antonio Lerario for the very supportive working conditions during her
postdoc at SISSA, in Trieste, Italy. G. Craciun's work was supported in part by the National Science Foundation grant DMS-2051568.

\printbibliography

\bigskip \medskip

\noindent
\footnotesize 

{\bf \noindent Authors:}

\bigskip

\noindent Gheorghe Craciun\\
University of Wisconsin-Madison, USA\\
 {\tt craciun@math.wisc.edu}
\vspace{0.5cm}

\noindent Jiaxin Jin\\ The Ohio State University, USA\\
{\tt jin.1307@osu.edu}
\vspace{0.5cm}

\noindent Miruna-\c Stefana Sorea\\ SISSA (Scuola Internazionale Superiore di Studi Avanzati), Trieste, Italy and Lucian Blaga University, Sibiu, Romania\\
{\tt msorea@sissa.it}

\end{document}